\pgfplotsset{compat=1.17}
\pgfplotsset{plot coordinates/math parser=false}
\providecommand{\keywords}[1]{\textit{Keywords:} #1}
\providecommand{\msc}[1]{\textit{2010 MSC:} #1}
\newtheorem{Def}{Definition}
\newtheorem{Lem}{Lemma}
\newtheorem{Prop}{Proposition}
\newtheorem{Rmk}{Remark}
\newtheorem{Ex}{Example}
\newcommand{\R}{\mathbb{R}}
\newcommand{\N}{\mathbb{N}}
\newcommand{\Z}{\mathbb{Z}}
\newcommand{\eps}{\varepsilon}
\newcommand{\ddt}{\partial_t}
\newcommand{\ddx}{\partial_x}
\newcommand{\Psiq}{\Psi_Q}
\newcommand{\Psiqp}{{\tilde \Psi_Q}}
\begin{document}
\title{A central scheme for coupled hyperbolic systems}
\author{
  Michael~Herty$^{1}$ \and
  Niklas~Kolbe$^{1}$\footnote{Corresponding author} \and
  Siegfried~Müller$^1$
}

\date{
  \small
  $^1$Institute of Geometry and Applied Mathematics,\\ RWTH Aachen University, Templergraben 55,\\ 52062 Aachen, Germany\\
   \smallskip
   {\tt \{herty,kolbe,mueller\}@igpm.rwth-aachen.de} \\
   \smallskip
   \today
}
\maketitle

\begin{abstract}
A novel numerical scheme to solve coupled systems of conservation laws is introduced. The scheme is derived based on a relaxation approach and does not require information on the Lax curves of the coupled systems, which simplifies the computation of suitable coupling data. The coupling condition for the underlying relaxation system plays a crucial role as it determines the behavior of the scheme in the zero relaxation limit. The role of this condition is discussed, a consistency concept with respect to the original problem is introduced, well-posedness is analyzed and explicit, nodal Riemann solvers are provided. Based on a case study considering the $p$-system of gas dynamics a strategy for the design of the relaxation coupling condition within the new scheme is provided.

\par
\vspace{0.5em}
\noindent
\msc{35L65, 35R02, 65M08}
\par
\vspace{0.5em}
\noindent
\keywords{Coupled conservation laws; hyperbolic systems, finite-volume schemes; coupling conditions; relaxation system}
\end{abstract}

\section{Introduction}\label{sec:intro}
Coupled systems of hyperbolic conservation or balance laws have been discussed in the mathematical literature to address various modeling questions e.g., regarding two-phase dynamics, multi-scale processes and networks, see e.g.,~\cite{bressan2014flowsnetwor}. The mathematical treatment of interface coupling has been motivated by boundary value problems~\cite{godlewski2004numerintercoupl,godlewski2005, ambroso2008coupllagran,chalons2008,dubois1988boundconditnonlin} or by applications to traffic flow on road networks~\cite{garavello2006traffflownetwor,MR2679594}, or gas dynamics in large-scale pipelines~\cite{banda2006coupleuler,MR2219276,brouwer2011gas,MR3335526}. It has been further expanded into a well-posedness result for coupled general gas dynamics, see e.g.  \cite{MR2438778,MR2441091,MR4175145}. Usually the coupling is imposed at a single spatial interface point as it has been done e.g., to model production systems~\cite{dapice2010model} and blood flow through systems of blood vessels~\cite{formaggia1999multismodelcirculsystem} to mention only a few.
\par 
 A main challenge has been the derivation of well-posed initial boundary value problems for coupled dynamics out of physically (or problem) induced conditions, see~\cite{bressan2014flowsnetwor} for a recent review. A well-known technique by now is the analytical concept of (half-)Riemann problems~\cite{herty2006coupl} or  solvers~\cite{garavello2016model}.
Moreover, the use of wave-front-tracking techniques yields well-posed coupled problems for a variety of mathematical fluid-type models, see e.g.~\cite{holden1995,MR1816648}.
This analytical tool has given rise to Riemann solver based numerical methods, which have been proposed e.g.~in~\cite{godlewski2004numerintercoupl,holden1995} and since then extended towards high-order methods~\cite{muller2015,bretti2006fast,borsche2016numer,banda2016numer}. Some high-order methods rely on the linearization of the imposed conditions such that the Lax-curves are obtained trivially, see e.g.~\cite{banda2016numer,borsche2014aderschemhigh}. We focus here on finite-volume schemes ---  finite-element based approaches, such as~\cite{egger2018}, require a different treatment of coupling conditions. Linearization techniques and Roe-type schemes have been used to avoid the explicit computation of eigenvalues in  two-phase problems, see e.g..~\cite{banda2015}. A linearization of the nonlinear dynamics at the coupling point is also at the core of an implicit (in space) numerical scheme~\cite{kolb2010}.
Recently using projection of the fluxes in the direction normal to the interface an extension of these techniques to the multi-dimensional case has been introduced in~\cite{herty2019couplcompreulerequat}. A related problem is given by conservation laws with discontinuous flux at a single point in space. This problem has applications, for instance, in porous media~\cite{gimse1992solutcauch}, sedimentation processes~\cite{diehl1995,burger2003}, traffic flow~\cite{holden1995} or in supply chain models~\cite{herty2013exist}. In the scalar, one-dimensional case, if conservation of mass and a modified entropy condition across the interface are imposed, solutions are  constructed~\cite{adimurthi2005optim} and numerically approximated ~\cite{mishra2017numermethodconser}. A numerical approximation based on relaxation~\cite{jin1995relaxschemsystem} of (scalar) discontinuous fluxes has been proposed in~\cite{karlsen2003relaxschemconser}. These schemes however, rely on analytical expression of Lax-curves which limits those techniques e.g., when treating phenomena like multi-phase flow. In various cases such curves are not available~\cite{hantke2018analysimulnew,hantke2019closurconditone} or not explicitly given~\cite{mueller2006riemanprobleuler}. 
\par
In this work a new approach is introduced that does not rely on Lax-curves at the interface. Therefore, building on our prior work on scalar equations on networks~\cite{herty2023centr}, the nonlinear systems are rewritten in the relaxation form proposed in~\cite{jin1995relaxschemsystem}. Due to the linearity of the relaxed systems the Lax-curves are linear spanned by the constant eigenvectors of the flux matrix. Based on the reformulation we derive an explicit scheme for coupled systems of conservation laws by taking a discretization to the zero relaxation limit as suggested also in \cite{jin1995relaxschemsystem}. A related approach has been followed in~\cite{borsche2018kinet}, where a kinetic relaxation model governs the coupling conditions. Another alternative is the construction of vanishing viscosity solutions, addressed in the schemes in~\cite{karlsen2017convergodunschem,towers2022explicfinitvolum}, that avoids the use of Lax-curves at the expense of formally treating parabolic systems. The system case that is discussed here requires a detailed analysis of the modified coupling condition imposed to the relaxation formulation, which we will consider.
\par 
The outline of this manuscript is as follows: in Section~\ref{sec:coupledrelaxation} we introduce the relaxation form for coupled systems of hyperbolic systems as well as a consistency concept for the corresponding coupling condition. In Section~\ref{sec:RS} we discuss suitable Riemann solvers of the coupled problem, discuss their well-posedness and provide an explicit form for the case of linear coupling conditions. Section~\ref{sec:schemes} is concerned with the construction of a finite-volume scheme for the coupled problem. In Section~\ref{sec:psystem}, we consider the $p$-system of gas dynamics in a case study and conduct numerical experiments for different choices of the relaxation coupling condition and then conclude providing a general strategy for the  construction of this condition.

\section{The coupled relaxation system}\label{sec:coupledrelaxation}
We consider a coupled system of hyperbolic conservation laws on the real line of the form 
\begin{subequations}\label{eq:hyperbolicsystem11}
\begin{align}
 \ddt U + \ddx F_1(U) &=0,\quad \text{in}(0, \infty) \times (-\infty, 0),\\
 \ddt U + \ddx F_2(U) &=0,\quad \text{in} (0, \infty) \times (0,\infty),
\end{align}
\end{subequations}
with vector valued state variable $U(t,x)\in \R^n$ and flux functions $F_1,F_2:\R^n\rightarrow \R^n$. At the interface located at $x=0$ a transition between the two flux functions takes place and we impose the coupling condition
\begin{equation}\label{eq:originalcoupling}
  \Psi_U(U(t, 0^-), U(t, 0^+)) =0 \qquad \text{for a. e. }t>0
\end{equation}
assuming a suitable mapping $\Psi_U:\R^n \times \R^n \rightarrow \R^m$ for $m\in\N$. To obtain a well-posed problem a sufficiently large number of suitable conditions is required and thus $m$ depends on the given flux functions. Moreover, we assume initial data given by the vector valued function $U^0(x)$, which is compatible with the coupling condition~\eqref{eq:originalcoupling}.

The relaxation system~\cite{jin1995relaxschemsystem} was introduced as a dissipative approximation of a hyperbolic system, recovering the original system in the so-called relaxation limit. In this work we use it as a tool to derive coupling information giving rise to a new method.
The relaxation system corresponding to~\eqref{eq:hyperbolicsystem11} employs the auxiliary state variable $V(t,x)\in \R^n$ and the relaxation rate $\varepsilon>0$ and reads 
\begin{subequations}\label{eq:relsyst11}
  \begin{align}
    \ddt U + \ddx V &= 0, && \text{in }(0, \infty) \times \R\setminus \{0 \}, \label{eq:relsyst11u}\\
    \ddt V + A_1 \, \ddx U &= \frac 1 \eps (F_1(U) - V),  && \text{in }(0, \infty) \times (-\infty, 0), \label{eq:relsyst11vl}\\
    \ddt V + A_2 \, \ddx U &= \frac 1 \eps (F_2(U) - V),  && \text{in }(0, \infty)\times(0, \infty), \label{eq:relsyst11vr}
  \end{align}
\end{subequations}
where $A_1$ and $A_2$ denote diagonal $n\times n$ matrices with positive diagonal entries $a_j^i$ for $j=1,\dots N$ and $i=1,2$. These matrices satisfy the subcharacteristic condition
\begin{equation}\label{eq:subchar}
  A_i - (DF_i(U))^2 \geq 0 \qquad i=1,2
\end{equation}
for all $U$. Here, $DF_i(U)$ refers to the Jacobian of $F_i$ in $U$ and the inequality indicates positive semi-definiteness. We note that both $U$ and $V$ depend on $\varepsilon$. Introducing the vector $Q=(U, V)$ we close system~\eqref{eq:relsyst11} using the relaxation coupling condition
\begin{equation}\label{eq:relaxationcoupling}
  \Psiq(Q(t, 0^-), Q(t, 0^+)) =0 \qquad \text{for a. e. }t>0.
\end{equation}
for a mapping $\Psiq:\R^{2n} \times \R^{2n} \rightarrow \R^{l}$, where $l\in\N$. The particular choice of $\Psi_Q$ and $m$ for given $\Psi_U$ is discussed in the following parts of this paper. Initial data for the relaxation system is derived from the original initial data by setting $Q^0=(U^0, V^0)$ such that
\begin{equation}
V^0(x)=F_1(U^0(x))\quad \text{if }x<0, \quad V^0(x)=F_2(U^0(x))\quad \text{if }x>0
\end{equation}
and compatibility of the initial data $Q^0$ with the coupling condition~\eqref{eq:relaxationcoupling} is assumed. 

It is well-known that the uncoupled relaxation system recovers the original system of conservation laws as well as $V=F(U)$ as $\varepsilon$ tends to zero. We are interested in this  relaxation limit at the interface as $\varepsilon \rightarrow 0$. 

\subsection{Continuous relaxation limit}\label{sec:continuouslimit}
The fact that the uncoupled relaxation system tends to a system of conservation laws in the relaxation limit, see~\cite{jin1995relaxschemsystem}, raises the question how the coupled systems and, in particular, the coupling conditions~\eqref{eq:originalcoupling} and~\eqref{eq:relaxationcoupling} relate. Here we address this question using asymptotic analysis and suppose that the solution $Q=(U, V)$ of the coupled relaxation system~\eqref{eq:relsyst11} endowed with the coupling condition~\eqref{eq:relaxationcoupling} can be written in terms of a Chapman--Enskog expansion~\cite{chapman1990mathemtheornonuniforgases}, i.e.,  
\begin{equation}
Q = \tilde Q^0 + \sum_{k=1}^\infty \varepsilon^k \tilde Q^k.
\end{equation}
As a solution of the relaxation system in the limit $\varepsilon \rightarrow 0$ on both half-axes, the state $\tilde Q^0=(\tilde U^0, \tilde V^0)$ has the property
\begin{equation}
  \tilde V^{0}(t, x) = F_1(\tilde U^{0}(t, x)) \text{ if }x<0, \qquad
  \tilde V^{0}(t,x) = F_2(\tilde U^{0}(t, x)) \text{ if }x>0
\end{equation}
and $\tilde U^0$ is a solution to the system of conservation laws on both half-axes. If we assume that $\Psi_Q$ is continuously differentiable, we obtain due to Taylor expansion
\begin{equation}
  \Psi_Q[Q(t, 0^-), Q(t, 0^+)] =   \Psi_Q\left[\tilde Q^0(t, 0^-), \tilde Q^0(t, 0^+)\right] + \mathcal{O}(\varepsilon)
\end{equation}
and, in particular, in the limit $\varepsilon \rightarrow 0$ we have
\begin{equation}
 \Psi_Q\left[\tilde Q^0(t, 0^-), \tilde Q^0(t, 0^+)\right] = 0.
\end{equation}
This observation motivates the following consistency condition.

\begin{Def}\label{def:consistency}
  The coupled relaxation system~\eqref{eq:relsyst11} is \emph{consistent} with the coupled system of conservation laws~\eqref{eq:hyperbolicsystem11} iff for a.~e. $t>0$ the corresponding coupling conditions~\eqref{eq:relaxationcoupling} and~\eqref{eq:originalcoupling} satisfy 
  \begin{equation}
    \Psi_U\left[U(t, 0^-), U(t, 0^+)\right] = 0 \quad \text{iff} \quad \Psiq \left[
    \begin{pmatrix} U(t, 0^-) \\
      f_1\left(U(t, 0^-) \right)
    \end{pmatrix},
    \begin{pmatrix} U(t, 0^+) \\
      f_2\left(U(t, 0^+) \right)
    \end{pmatrix}
  \right] = 0.
\end{equation}
\end{Def}
Assuming a smaller number of conditions for the system of conservation laws than for the relaxation system, i.e., $m<l$, this definition implies that consistency requires redundancies occurring in the relaxation limit due to~\eqref{eq:relaxationcoupling}.

\section{Riemann solvers for the coupled relaxation system}\label{sec:RS}
In this section we consider \emph{Riemann solvers} for the coupled relaxation system, which identify coupling data that both solve a half-Riemann problem and satisfy the coupling condition. They are an integral component of the finite volume schemes introduced in Section~\ref{sec:schemes}. We introduce the concept in Section~\ref{sec:halfRiemann}, discuss their well-posedness with respect to the coupling condition in Section~\ref{sec:generalcoupling}, derive an explicit form in the linear case in Section~\ref{sec:linearcoupling} and study their behavior in the relaxation limit in Section~\ref{sec:discretelimit}. 

\subsection{The half-Riemann problem}\label{sec:halfRiemann}
A finite volume scheme for a coupled problem such as~\eqref{eq:relsyst11} requires suitable coupling/boundary data, which is obtained by solving a \emph{half-Riemann problem}~\cite{dubois1988boundconditnonlin}. Later we will introduce the concept of \emph{Riemann solvers}, which identify coupling data that both solve the half-Riemann problem and satisfy a coupling condition. 

To discuss the half-Riemann problem in system~\eqref{eq:relsyst11} we require information about its Lax-curves. The relaxation system can be rewritten on both half-axes ($i=1$ if left and $i=2$ if right) as
\begin{equation}
  \ddt Q + S_i \, \ddx Q = \frac{1}{\varepsilon}
  \begin{pmatrix}
    0 \\
    F_i(U) - V
  \end{pmatrix}
\end{equation}
where $S_i\in \R^{2n \times 2n}$ refers to the diagonalizable block matrix
\begin{equation}\label{eq:diagonalization}
  S_i =
  \begin{pmatrix}
    0 & I \\
    A_i & O
  \end{pmatrix}
  =
  \begin{pmatrix}
    -(\sqrt{A_i})^{-1} & (\sqrt{A_i})^{-1} \\
    I & I
  \end{pmatrix}
  \begin{pmatrix}
    -\sqrt{A_i} & 0 \\
    0 & \sqrt{A_i}
  \end{pmatrix}
  \begin{pmatrix}
    -\frac 1 2 \sqrt{A_i} & \frac 1 2 I \\
    \frac 1 2 \sqrt{A_i} & \frac 1 2 I
  \end{pmatrix}
  =: R_i \Lambda_i R_i^{-1}
\end{equation}
By taking the root of each diagonal entry in $A$ we obtain $\sqrt{A_i}$, which is invertible and satisfies $\sqrt{A_i} \sqrt{A_i} = A_i$. The decomposition~\eqref{eq:diagonalization} shows that the eigenvalues of the system are given by
\begin{equation}\label{eq:eigenvalues}
\lambda_j = -\sqrt{a^i_j}, \qquad \lambda_{n+j} = \sqrt{a^i_j} \qquad \text{for }j=1,\dots,n. 
\end{equation}
The columns of the matrix $R_i$ include the (right) eigenvectors of the system that can be segmented as  
\begin{equation}
    R_i =
  \begin{pmatrix}
    -(\sqrt{A_i})^{-1} & (\sqrt{A_i})^{-1} \\
    I & I
  \end{pmatrix}
  =:
  \begin{pmatrix}
    R_i^- & R_i^+
  \end{pmatrix}
\end{equation}
so that $R_i^-\in\R^{2n \times n}$ includes the eigenvectors corresponding to the negative eigenvalues and $R_i^+\in\R^{2n \times n}$ the ones corresponding to the positive eigenvalues. Analogously we define the left eigenvectors as the rows of the matrix
\begin{equation}
  L_i \coloneq R_i^{-1}=
    \begin{pmatrix}
    -\frac 1 2 \sqrt{A_i} & \frac 1 2 I \\
    \frac 1 2 \sqrt{A_i} & \frac 1 2 I
    \end{pmatrix}
    =
  \begin{pmatrix}
    L_i^- \\ L_i^+
  \end{pmatrix},
\end{equation}
which can again be split with respect to the signs of the corresponding eigenvalues into the matrices $L_i^-$, $L_i^+\in\R^{n \times 2n}$.
The characteristic variables of the system are
\begin{equation}\label{eq:characteristics}
  \begin{pmatrix}
    W^-_i \\
    W^+_i
  \end{pmatrix}
  := R_i^{-1}Q = \frac 1 2
  \begin{pmatrix}
    V - \sqrt{A_i}U \\
    V + \sqrt{A_i}U
  \end{pmatrix}.
\end{equation}
Starting from a state $Q_0^*=(U_0^*, V_0^*)^T$ the Lax-curves of the systems are lines in the state space given by
\begin{equation}\label{eq:individuallaxcurves}
\mathcal{L}_{\lambda_j}(Q_0^*; \sigma_j^-)= Q_0^* - \sigma_j^-  (a^1_j)^{-1/2} \, e_j + \sigma_j^- e_{n+j}, \quad \mathcal{L}_{\lambda_{n+j}}(Q_0^*; \sigma_j^+)= Q_0^* + \sigma_j^+ (a^2_j)^{-1/2} e_{j} + \sigma_j^+ e_{n+j}
\end{equation}
for $j=1,\dots,n$ with $e_j \in \R^{2n}$ denoting the $j$-th unit vector and parameter $\sigma_j^\pm\in\R$. Moreover, we define the set
\begin{subequations}\label{eq:laxcurves}
\begin{equation}
 \mathcal{L}^-(Q_0^*) \coloneq \left\{ Q_0^* + R_1^- \Sigma^-:
    \Sigma^- \in \R^{n}
  \right\}
  =
  \left\{
    \begin{pmatrix}
      U_0^* -  (\sqrt{A_1})^{-1} \Sigma^- \\
      V_0^* + \Sigma^-
    \end{pmatrix}:
    \Sigma^- \in \R^{n}
  \right\}.
\end{equation}
which includes all states that connect to $Q_0^*$ by Lax curves with negative speeds, i.e.,  $\mathcal{L}_{\lambda_1}\dots\mathcal{L}_{\lambda_n}$. Conversely, the set of states connecting to $Q_0^*$ by Lax curves with positive speeds, i.e.,  $\mathcal{L}_{\lambda_{n+1}}\dots \mathcal{L}_{\lambda_{2n}}$, is 
\begin{equation}
  \mathcal{L}^+(Q_0^*) \coloneq \left\{ Q_0^* + R_2^+ \Sigma^+:
    \Sigma^+ \in \R^{n}
  \right\}
  =
  \left\{
    \begin{pmatrix}
      U_0^* + (\sqrt{A_2})^{-1} \Sigma^+ \\
      V_0^* + \Sigma^+
    \end{pmatrix}:
    \Sigma^+ \in \R^{n}
  \right\}.
\end{equation}
\end{subequations}
Note that for the following discussion the Lax-curves with negative speeds $\mathcal{L}_{\lambda_1},\dots,\mathcal{L}_{\lambda_n}$ in~\eqref{eq:laxcurves} as well as $\mathcal{L}^-$ in~\eqref{eq:individuallaxcurves} are defined for the relaxation system on the left half-axis, whereas the Lax-curves with positive speeds $\mathcal{L}_{\lambda_{n+1}},\dots,\mathcal{L}_{\lambda_{2n}}$ and $\mathcal{L}^+$ are defined for the relaxation system on the right half-axis. 

\begin{figure}
  \centering
  \begin{tikzpicture}[x=\linewidth/25,y=\linewidth/25]
\draw[->, thin] (-8,0) -- (8,0) node[below] {$x$};
\node[label=below:{$0$}] at (0, 0){};
\draw[->, thin, dashed] (0, 0) -- (0, 5) node[label=above:{$\Psiq[Q_R, Q_L]=0$}] {};
\node[left] at (0, 4.5){$t$};
\node at (-5, .8){$Q_0^-$};
\draw[->] (-5, 1.3) arc (160:105:4); 
\node at (-1.5, 4){$Q_R$};
\node at (5, .8){$Q_0^+$};
\draw[->] (5, 1.3) arc (20:75:4); 
\node at (1.5, 4){$Q_L$};
\draw[-] (0, 0) -- (-7, 3) node[left] {$\mathcal{L}_{\lambda_1}(Q_0^-)$};
\draw[-] (0, 0) -- (-4, 4.7) node[left] {$\mathcal{L}_{\lambda_n}(Q_0^-)$};
\draw[-] (0, 0) -- (4, 4.7) node[right] {$\mathcal{L}_{\lambda_{n+1}}(Q_0^+)$};
\draw[-] (0, 0) -- (7, 3) node[right] {$\mathcal{L}_{\lambda_{2n}}(Q_0^+)$};
\end{tikzpicture}
  \caption{Wave structure of the coupled relaxation system in the $x$-$t$-plane. The left trace data $Q_0^{-}$ are connected to the coupling data $Q_R$ by the Lax-curves of negative speeds, i.e., $\mathcal{L}_{\lambda_1}\dots\mathcal{L}_{\lambda_n}$ and the outgoing trace data $Q_0^{+}$ are connected to the coupling data $Q_L$ by the Lax-curves of positive speeds, i.e., $\mathcal{L}_{\lambda_{n+1}}\dots \mathcal{L}_{\lambda_{2n}}$. Coupling data on the incoming and outgoing edges are related by the coupling condition $\Psiq$.}\label{fig:waves}
\end{figure}
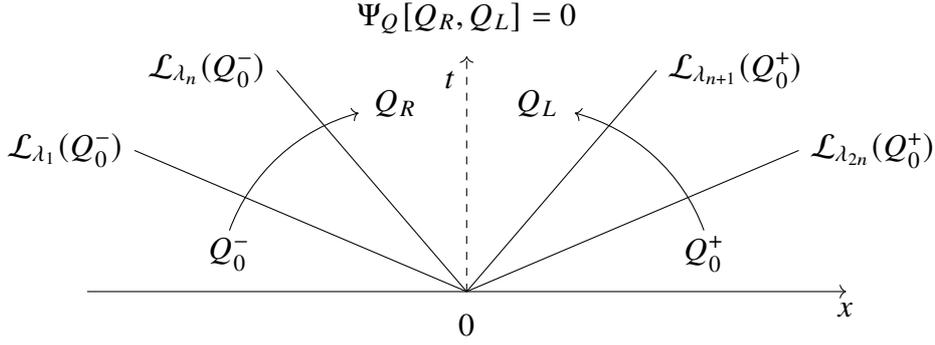

We consider a situation, where we are given discrete data next to the coupling interface from a finite volume scheme. We refer to this so-called trace-data by $Q_0^- = (U_0^-, V_0^-)$ left from the interface and $Q_0^+ = (U_0^+, V_0^+)$ right from the interface, respectively. We denote boundary/coupling data for the left half axis $(-\infty, 0)$ by $Q_R=(U_R, V_R)$. Analogously, we denote coupling/boundary data for the right half axis $(0, \infty)$ by $Q_L=(U_L, V_L)$. If $Q_R$ and $Q_L$ constitute a solution of the half-Riemann problem they need to satisfy
\begin{equation}\label{eq:laxcondition}
  Q_R \in \mathcal L^-(Q_0^-) \qquad \text{and} \qquad Q_L \in \mathcal L^+(Q_0^+).
\end{equation}
A Riemann solver (RS) for the relaxation system~\eqref{eq:relsyst11} corresponding to the coupling condition~\eqref{eq:relaxationcoupling} is a mapping
\begin{equation}\label{eq:RSdef}
\mathcal{RS}: \R^{2n \times 2n}\rightarrow \R^{2n \times 2n}, \qquad (Q_0^-, Q_0^+) \rightarrow (Q_R, Q_L)
\end{equation}
that assigns coupling data satisfying the coupling condition and solving the half-Riemann problem to given trace data. It is defined by condition~\eqref{eq:laxcondition} and
\begin{equation}\label{eq:psiq}
  \Psiq[Q_R, Q_L] = 0
\end{equation}
for the coupling function introduced in~\eqref{eq:relaxationcoupling}. We note that the well-posedness of the RS depends on the coupling condition. The relation between traces, coupling data and coupling condition in the relaxation system is illustrated in Figure~\ref{fig:waves}.

\subsection{Well-posedness}\label{sec:generalcoupling}
In this section we assume general coupling conditions of the form~\eqref{eq:relaxationcoupling} with $l=2n$, i.e., 
\begin{equation}\label{eq:psigeneral}
  \Psiq[Q_R, Q_L] = 0, \qquad \Psi_Q:\R^{2n}\times \R^{2n} \rightarrow \R^{2n}.
\end{equation}
To construct a corresponding RS we aim to identify coupling data that not only satisfies~\eqref{eq:psigeneral} but also solves the half-Riemann problem~\eqref{eq:laxcondition}. Condition~\eqref{eq:laxcondition} allows us to parameterize the coupling data as
\begin{equation}\label{eq:couplingdataparameterized}
  Q_R(\Sigma; Q_0) = Q_0^- + R^-_1 \Sigma^-, \qquad Q_L(\Sigma; Q_0) = Q_0^+ + R^+_2 \Sigma^+,
\end{equation}
using the notation
\begin{equation}\label{eq:Q0leftright}
    Q_0 = \left( Q_0^-, Q_0^+ \right), \quad
  \Sigma =
  \left( \Sigma^-, \Sigma^+\right)
  \end{equation}
and thus to introduce the parametrized form of the coupling function
\begin{equation}
  \Psiqp[\Sigma; Q_0] \coloneq \Psi_{Q}[ Q_R(\Sigma; Q_0),  Q_L(\Sigma; Q_0)], \qquad \Psiqp:\R^{2n}\times \R^{4n} \rightarrow \R^{2n}.
\end{equation}
Given this parametrization we can interpret the RS~\eqref{eq:RSdef} for coupling condition~\eqref{eq:psigeneral} as a routine that first identifies $\Sigma$ by solving the root problem $\Psiqp[\Sigma; Q_0]=0$ for given $Q_0$ and then outputs the data $Q_R(\Sigma; Q_0)$ and $Q_L(\Sigma; Q_0)$.
It is well posed iff for any trace data $Q_0$ the equation $\Psiqp[\Sigma; Q_0]=0$ has a unique solution $\Sigma = \Xi(Q_0)$. We state a result on its local regularity.
\begin{Lem}\label{lem:implicit}
Let $\Psiq$ be continuously differentiable. Suppose that for a fixed $Q_0\in\R^{4n}$ there is a unique $\Sigma\in \R^{2n}$ satisfying $\Psiqp[\Sigma; Q_0]=0$ and the Jacobian $D_\Sigma \Psiqp[\Sigma; Q_0]$ is invertible. Then there is an open ball $B_{Q_0}\subset \R^{4n}$ around $Q_0$ such that the RS~\eqref{eq:RSdef} with coupling condition~\eqref{eq:psigeneral} is well defined and continuously differentiable for all $\tilde Q_0=(\tilde Q_0^-, \tilde Q_0^+)^T\in B_{Q_0}$.
\end{Lem}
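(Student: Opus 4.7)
The plan is to recognize this as a direct application of the implicit function theorem (IFT) to the map $\tilde\Psi_Q$, where $\Sigma$ plays the role of the unknown and $Q_0$ the role of the parameter. First I would verify that $\tilde\Psi_Q$ is continuously differentiable on $\R^{2n}\times\R^{4n}$: by its definition, $\tilde\Psi_Q[\Sigma;Q_0] = \Psi_Q[Q_0^- + R_1^-\Sigma^-,\, Q_0^+ + R_2^+\Sigma^+]$ is the composition of the assumed $C^1$ map $\Psi_Q$ with an affine (hence smooth) map in $(\Sigma,Q_0)$, so the regularity transfers by the chain rule and the Jacobians $D_\Sigma \tilde\Psi_Q$ and $D_{Q_0}\tilde\Psi_Q$ exist and are continuous.

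Next I would apply the IFT at the given pair $(\Sigma, Q_0)$: since $\tilde\Psi_Q[\Sigma;Q_0]=0$ and $D_\Sigma\tilde\Psi_Q[\Sigma;Q_0]\in\R^{2n\times 2n}$ is invertible by hypothesis, the IFT yields open balls $B_{Q_0}\subset\R^{4n}$ around $Q_0$ and $B_\Sigma\subset\R^{2n}$ around $\Sigma$, together with a $C^1$ map $\Xi\colon B_{Q_0}\to B_\Sigma$, such that for every $\tilde Q_0\in B_{Q_0}$ the equation $\tilde\Psi_Q[\Xi(\tilde Q_0);\tilde Q_0]=0$ holds and $\Xi(\tilde Q_0)$ is the unique zero in $B_\Sigma$. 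Shrinking $B_{Q_0}$ if necessary to preserve invertibility of $D_\Sigma\tilde\Psi_Q$ (which is automatic by continuity) ensures that this local picture persists throughout $B_{Q_0}$.

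To finish, I would translate this back to the Riemann solver itself. Using the parametrization in~\eqref{eq:couplingdataparameterized}, define
\begin{equation*}
\mathcal{RS}(\tilde Q_0) = \bigl( Q_R(\Xi(\tilde Q_0);\tilde Q_0),\, Q_L(\Xi(\tilde Q_0);\tilde Q_0) \bigr).
\end{equation*}
Since $Q_R$ and $Q_L$ are affine in $(\Sigma,Q_0)$ and $\Xi$ is $C^1$, the composition $\mathcal{RS}$ is continuously differentiable on $B_{Q_0}$. The conditions~\eqref{eq:laxcondition} are satisfied by construction of the parametrization, and the coupling condition~\eqref{eq:psiq} holds because $\tilde\Psi_Q[\Xi(\tilde Q_0);\tilde Q_0]=0$, so $\mathcal{RS}$ is a well-defined $C^1$ Riemann solver on $B_{Q_0}$ as claimed.

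There is no substantive obstacle here — the statement is essentially a repackaging of the IFT. The only point that requires mild care is that the lemma provides \emph{local} well-posedness only: the uniqueness assertion applies to solutions within a neighborhood $B_\Sigma$ of the given $\Sigma$, not globally, which should be stated explicitly so the reader does not conflate this with the global uniqueness hypothesis on the reference point $Q_0$.
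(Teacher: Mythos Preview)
Your proposal is correct and follows essentially the same route as the paper: apply the implicit function theorem to $\tilde\Psi_Q$ at the pair $(\Sigma,Q_0)$ to obtain the $C^1$ map $\Xi$, then read off the Riemann solver via the affine parametrization~\eqref{eq:couplingdataparameterized}. Your added remarks (explicitly checking that $\tilde\Psi_Q$ inherits $C^1$-regularity from $\Psi_Q$, and the caveat that uniqueness is only local in $\Sigma$) are sensible elaborations but do not depart from the paper's argument.
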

\begin{proof}
  Due to the implicit function theorem there are open balls $B_\Sigma$ and $B_{Q_0}$ around $\Sigma$ and $Q_0$, respectively, and a unique continuously differentiable mapping $\Xi: B_{Q_0} \rightarrow B_{\Sigma}$ such that
\begin{equation}\label{eq:implicitxi}
\Psiqp[\tilde \Sigma; \tilde Q_0] = 0 \quad \text{iff}\quad \tilde \Sigma = \Xi(\tilde Q_0) 
\end{equation}
for all $\tilde Q_0 \in B_{Q_0}$ and $\tilde \Sigma \in B_\Sigma$. The statement is obtained taking into account that the output of the RS~\eqref{eq:RSdef} is uniquely given by $Q_R(\Xi(\tilde Q_0); \tilde Q_0)$ and $Q_L(\Xi(\tilde Q_0); \tilde Q_0)$ for all $\tilde Q_0 \in B_{Q_0}$.
\end{proof}

A further result is obtained applying the contraction mapping theorem to a fixed point iteration of the form
\begin{equation}\label{eq:fpiteration}
  \Sigma^{n+1} = \Sigma^n - A(Q_0) \Psiqp[\Sigma^n; Q_0]
\end{equation}
with $A(Q_0)\in\R^{2n \times 2n}$ invertible.

\begin{Lem}\label{lem:banach}
  Let $\mathcal{S}\subset \R^{4n}$ and $\mathcal{P} \subset \R^{2n}$ denote open sets. Suppose that for any $Q_0 \in \mathcal{S}$ there is a regular matrix $A(Q_0)\in\R^{2n \times 2n}$ such that
  \begin{equation}\label{eq:fpselfmapping}
    T_{Q_0}(\Sigma) = \Sigma - A(Q_0) \Psiqp[\Sigma; Q_0] \in \mathcal{P}
  \end{equation}
  for all $\Sigma \in  \mathcal{P}$ and $T_{Q_0}$ is Lipschitz continuous with Lipschitz constant $K < 1$. 
 Then $\mathcal{RS}:\mathcal{S} \rightarrow \R^{2n}\times \R^{2n}$ defined through~\eqref{eq:laxcondition},~\eqref{eq:RSdef} and~\eqref{eq:psigeneral} is well defined.
\end{Lem}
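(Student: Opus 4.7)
The plan is to fix $Q_0 \in \mathcal{S}$ and apply the Banach fixed point theorem to the self-map $T_{Q_0}: \mathcal{P} \to \mathcal{P}$. By hypothesis $T_{Q_0}$ is a strict contraction with Lipschitz constant $K < 1$, so starting from any $\Sigma^0 \in \mathcal{P}$ the Picard iteration $\Sigma^{n+1} = T_{Q_0}(\Sigma^n)$ coincides with the scheme~\eqref{eq:fpiteration}, remains in $\mathcal{P}$ by the self-mapping assumption~\eqref{eq:fpselfmapping}, and satisfies the standard geometric estimate $\|\Sigma^{n+1} - \Sigma^n\| \leq K^n \|\Sigma^1 - \Sigma^0\|$, which exhibits the iterates as a Cauchy sequence in the complete space $\R^{2n}$. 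They therefore converge to a limit $\Sigma^*$, which by continuity of $T_{Q_0}$ is a fixed point. Uniqueness within $\mathcal{P}$ is immediate from the contraction property: two fixed points $\Sigma_1, \Sigma_2 \in \mathcal{P}$ would satisfy $\|\Sigma_1 - \Sigma_2\| \leq K \|\Sigma_1 - \Sigma_2\|$ and hence agree.

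To translate the fixed point back to the root problem, observe that $\Sigma^* = T_{Q_0}(\Sigma^*)$ rewrites as $A(Q_0)\, \Psiqp[\Sigma^*; Q_0] = 0$, and regularity of $A(Q_0)$ forces $\Psiqp[\Sigma^*; Q_0] = 0$. Setting $\Xi(Q_0) := \Sigma^*$, the Riemann solver output $\mathcal{RS}(Q_0) = (Q_R(\Xi(Q_0); Q_0), Q_L(\Xi(Q_0); Q_0))$ is unambiguously determined through the parametrization~\eqref{eq:couplingdataparameterized}; by construction it solves the half-Riemann problem~\eqref{eq:laxcondition}, and the identity $\Psi_Q[Q_R, Q_L] = \Psiqp[\Xi(Q_0); Q_0] = 0$ yields the coupling condition~\eqref{eq:psigeneral}. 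Well-definedness of $\mathcal{RS}$ on all of $\mathcal{S}$ follows by carrying out this construction pointwise in $Q_0$.

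The main technicality I anticipate is that $\mathcal{P}$ is only assumed open, whereas the Banach fixed point theorem is typically stated for a closed (hence complete) subset of a complete metric space, so the limit of the Picard iteration is a priori only known to lie in $\overline{\mathcal{P}}$. I would bridge this either by reading the hypothesis as implicitly placing $\mathcal{P}$ in a complete metric setting (e.g. taking $\mathcal{P}$ closed in $\R^{2n}$), or, starting from a chosen $\Sigma^0 \in \mathcal{P}$, by noting that the entire trajectory lies in the closed ball of radius $\|\Sigma^1 - \Sigma^0\|/(1-K)$ about $\Sigma^0$ and arguing that this ball sits inside $\mathcal{P}$. Apart from this minor topological point, everything else is a routine invocation of the contraction mapping principle.
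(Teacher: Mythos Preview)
Your proposal is correct and follows essentially the same approach as the paper: the paper's proof simply invokes the contraction mapping theorem on $T_{Q_0}:\mathcal{P}\to\mathcal{P}$, obtains the unique fixed point $\tilde\Sigma$ with $\Psiqp[\tilde\Sigma;Q_0]=0$, and declares the Riemann solver well defined. You spell out the same argument in more detail (the Picard iterates, the Cauchy estimate, the use of regularity of $A(Q_0)$ to pass from fixed point to root), and you even flag the open-versus-closed issue for $\mathcal{P}$ that the paper silently ignores.
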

\begin{proof}
  For $Q_0 \in \mathcal{S}$ the function $T_{Q_0}(\Sigma)$ maps from $\mathcal{P}$ to $\mathcal{P}$ due to~\eqref{eq:fpselfmapping} and is a contraction mapping as it is Lipschitz continuous with sufficiently small Lipschitz constant. Thus, by the contraction mapping theorem, the sequence~\eqref{eq:fpiteration} admits for any arbitrary $\Sigma^0\in \mathcal{P}$ a unique fixed point $\tilde \Sigma$ satisfying $\Psiqp[\tilde \Sigma; Q_0] =0$. This implies well-posedness of $\mathcal{RS}_{\Psiqp}$.
\end{proof}

\begin{Rmk}\label{rmk:matrixnorm}
  The Lipschitz continuity of $T_{Q_0}$ in Lemma~\ref{lem:banach} is implied by the stronger condition
    \begin{equation}\label{eq:fpderivativebound}
   \sup_{\Sigma \in \mathcal{P}} \|I - A(Q_0)D_\Sigma \Psiqp[Q_0; \Sigma] \| < 1.
 \end{equation}
\end{Rmk}
We note that Lemmata~\ref{lem:implicit} and~\ref{lem:banach} allow to freely choose the vector norm in $\R^n$ determining the induced matrix norm in Remark~\ref{rmk:matrixnorm}.

\subsection{Linear coupling conditions}\label{sec:linearcoupling}
In this section we discuss a special case of~\eqref{eq:psigeneral}, namely affine linear coupling functions of the form
\begin{equation}\label{eq:linearcoupling}
\Psiq[Q_R, Q_L] = B_R Q_R - B_L Q_L - P 
\end{equation}
for $B_R, B_L \in \R^{2n \times 2n}$ and $P \in \R^{2n}$. Under further conditions on the matrices we can provide explicit formulas of the coupling data  obtained by the corresponding RS~\eqref{eq:RSdef}. We introduce the block matrices
\begin{equation}\label{eq:truncatedR}
  \tilde R^- =
  \begin{pmatrix}
    R_1^- & 0
  \end{pmatrix}
  =
  \begin{pmatrix}
    - (\sqrt{A_1})^{-1} & 0 \\
    I & 0
  \end{pmatrix}
  , \qquad
    \tilde R^+ =
  \begin{pmatrix}
    0 & R_2^+
  \end{pmatrix}
  =
  \begin{pmatrix}
    0 &  (\sqrt{A_2})^{-1} \\
    0 & I
  \end{pmatrix}
\end{equation}
satisfying $\tilde R^- \Sigma = R_1^- \Sigma^-$ and $\tilde R^+ \Sigma = R_2^+ \Sigma^+$ assuming definition~\eqref{eq:Q0leftright}.

\begin{Prop}\label{prop:linearrs}
  Suppose the matrix $B_R \tilde R^- - B_L \tilde R^+$ is regular. Then the RS corresponding to the affine linear coupling function~\eqref{eq:linearcoupling} is well defined and yields the output
\begin{equation}\label{eq:affinelinearrsoutput}
    Q_R = Q_0^- + \tilde R^- \, \Sigma, \qquad Q_L = Q_0^+ + \tilde R^+ \, \Sigma,
  \end{equation}
  where $\Sigma$ is the unique solution of the linear system
  \begin{equation}\label{eq:sigmaexplicit}
    \left( B_R \tilde R^- - B_L \tilde R^+\right) \Sigma = P + B_L Q_0^+ - B_R Q_0^-.
  \end{equation}
\end{Prop}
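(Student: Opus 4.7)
The proof will be essentially a direct verification that substituting the half-Riemann parametrization into the affine linear coupling condition reduces the root problem $\Psiqp[\Sigma;Q_0]=0$ to the linear system stated in the proposition, which is uniquely solvable under the given regularity assumption.

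First, I would recall from~\eqref{eq:couplingdataparameterized} that any coupling data solving the half-Riemann problem can be written as $Q_R = Q_0^- + R_1^-\Sigma^-$ and $Q_L = Q_0^+ + R_2^+\Sigma^+$. Using the block-matrix identities $R_1^-\Sigma^- = \tilde R^- \Sigma$ and $R_2^+\Sigma^+ = \tilde R^+ \Sigma$ (which follow directly from the definitions~\eqref{eq:truncatedR} and the convention~\eqref{eq:Q0leftright}), this is exactly the representation claimed in~\eqref{eq:affinelinearrsoutput}. Hence the only task is to show that the parameter vector $\Sigma \in \R^{2n}$ is uniquely determined.

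Next I would substitute this parametrization into the affine linear coupling function~\eqref{eq:linearcoupling} to compute
\begin{equation*}
\Psiqp[\Sigma;Q_0] = B_R\bigl(Q_0^- + \tilde R^-\Sigma\bigr) - B_L\bigl(Q_0^+ + \tilde R^+\Sigma\bigr) - P,
\end{equation*}
and collect terms to rewrite the equation $\Psiqp[\Sigma;Q_0]=0$ in the equivalent form
\begin{equation*}
(B_R \tilde R^- - B_L \tilde R^+)\,\Sigma = P + B_L Q_0^+ - B_R Q_0^-,
\end{equation*}
which is exactly~\eqref{eq:sigmaexplicit}. Since the coefficient matrix is regular by assumption, this linear system admits a unique solution $\Sigma$, proving that the root problem for $\Psiqp$ has a unique solution for every trace data $Q_0$.

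To conclude well-posedness of the RS in the sense of Section~\ref{sec:generalcoupling}, I would observe that the unique $\Sigma$ obtained above produces, via~\eqref{eq:affinelinearrsoutput}, coupling data that by construction lies in $\mathcal{L}^-(Q_0^-)\times \mathcal{L}^+(Q_0^+)$ and satisfies $\Psi_Q[Q_R,Q_L]=0$, so the mapping $(Q_0^-,Q_0^+)\mapsto (Q_R,Q_L)$ defined by~\eqref{eq:RSdef} and~\eqref{eq:psiq} is well defined and given by the stated explicit formulas. There is no genuine obstacle here; the affine linear structure turns the implicit conditions of Lemma~\ref{lem:implicit} into a single invertibility hypothesis, which is precisely the assumption on $B_R\tilde R^- - B_L\tilde R^+$.
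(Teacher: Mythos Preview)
Your proposal is correct and follows essentially the same approach as the paper: substitute the parametrization~\eqref{eq:couplingdataparameterized} (rewritten via $\tilde R^\pm$) into the affine linear coupling function~\eqref{eq:linearcoupling}, rearrange to obtain the linear system~\eqref{eq:sigmaexplicit}, and invoke regularity of $B_R\tilde R^- - B_L\tilde R^+$ for uniqueness. The paper's own proof is in fact even terser, consisting of just this substitution step; your added remarks on why~\eqref{eq:affinelinearrsoutput} encodes the half-Riemann constraint and on well-posedness simply make explicit what the paper leaves implicit.
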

\begin{proof} Using the parametrization~\eqref{eq:couplingdataparameterized} we rewrite the coupling condition as
\begin{align*}
    0 &= \Psiq[Q_R(\Sigma; Q_0), Q_L(\Sigma; Q_0)] = B_R \left( Q_0^- + \tilde R^- \Sigma \right) - B_L \left( Q_0^+ + \tilde R^+ \Sigma \right) - P,
\end{align*}
which implies~\eqref{eq:sigmaexplicit}.
\end{proof}

In case of block matrices the system~\eqref{eq:sigmaexplicit} can be solved using a block LU decomposion.
\begin{Rmk}\label{rmk:linearblockmatrices}
  Suppose the matrices $B_R$ and $B_L$ are given in block form as
  \begin{equation}
    B_R =
    \begin{pmatrix}
      B_R^{11} &  B_R^{12} \\[5pt]
      B_R^{21} &  B_R^{22}
    \end{pmatrix}, \qquad
    B_L =
       \begin{pmatrix}
      B_L^{11} &  B_L^{12} \\[5pt]
      B_L^{21} &  B_L^{22}
    \end{pmatrix}
  \end{equation}
  with each block being a matrix in $\R^{n \times n}$. The system matrix in~\eqref{eq:sigmaexplicit} then takes the form
  \begin{equation}
    B_R \tilde R^- - B_L \tilde R^+  =
    \begin{pmatrix}
      B_R^{12} -  B_R^{11}(\sqrt{A_1})^{-1} &   -B_L^{12} - B_L^{11} (\sqrt{A_2})^{-1} \\[5pt]
       B_R^{22} - B_R^{21}(\sqrt{A_1})^{-1} &  -B_L^{22} - B_L^{21} (\sqrt{A_2})^{-1}
    \end{pmatrix}
    \eqcolon
    \begin{pmatrix}
      B^{11} & B^{12} \\
      B^{21} & B^{22}
    \end{pmatrix}
    = B.
  \end{equation}
  Suppose now that $B^{11}$ and $S \coloneq B^{22}-B^{21}(B^{11})^{-1} B^{12}$ are regular, then the RS corresponding to the coupling condition~\eqref{eq:linearcoupling} is well defined and coupling data can be computed using the inverse system matrix
  \begin{equation}\label{eq:luinverse}
    B^{-1} =     \begin{pmatrix}
      (B^{11})^{-1} \left( I + B^{12} S^{-1} B^{21} (B^{11})^{-1}\right) &  -(B^{11})^{-1} B^{12}S^{-1} \\
      -S B^{21} (B^{11})^{-1} & S^{-1}
    \end{pmatrix}.
  \end{equation}
\end{Rmk}

If $B^{11}$ in Remark~\ref{rmk:linearblockmatrices} is not regular a block LU decomposition of $B$ and an inverse matrix similar to~\eqref{eq:luinverse} might still be computable after suitable rotation of the rows and columns.

Note that assuming the coupling condition~\eqref{eq:linearcoupling} the relaxation system~\eqref{eq:relaxationcoupling} is consistent (in the sense of Definition~\ref{def:consistency}) with system~\eqref{eq:hyperbolicsystem11} under the coupling condition
\begin{equation}\label{eq:linearcouplinginu}
  \Psi_U[U_R, U_L] = B_R
  \begin{pmatrix}
U_R \\ f_1(U_R)
  \end{pmatrix}
  - B_L
    \begin{pmatrix}
U_L \\ f_2(U_L)
    \end{pmatrix}
    - P = 0.
  \end{equation}
  
\begin{Ex}[Kirchhoff conditions]\label{ex:KirchhoffCoupling}
A canonical choice of coupling conditions for~\eqref{eq:relsyst11} are the ones due to Kirchhoff, which require equality of the flux left and right from the interface, in all equations of the system, i.e.,  
\begin{equation}\label{eq:kirchhoffcouplingcondition}
  \Psiq^K[Q_R, Q_L] = S_1 Q_R - S_2 Q_L = 0.
\end{equation}

By choosing the diagonal entries sufficiently large (such that~\eqref{eq:subchar} holds) we can achieve $A_1=A_2=:A$, which we assume in the following. Consequently, the quantities in~\eqref{eq:diagonalization},~\eqref{eq:characteristics} and~\eqref{eq:laxcurves} are independent of the half-axis considered and we therefore neglect the index $i$. Since $S$ is regular, the coupling condition~\eqref{eq:kirchhoffcouplingcondition} simplifies to
\begin{equation}\label{eq:couplingconditionsimplified}
  \Psi_Q^K[Q_R, Q_L] = Q_R - Q_L = 0
\end{equation}
and thus the coupling function is of the form~\eqref{eq:linearcoupling} for $B_R=B_L=I$ and $P=0$.

From~\eqref{eq:sigmaexplicit} we obtain
\[
  \begin{pmatrix}
      -\Sigma^- \\
      ~\Sigma^+
  \end{pmatrix}
  =
    \begin{pmatrix}
    R^- & R^+
    \end{pmatrix}^{-1}
    (Q_0^- - Q_0^+)
    =
    \begin{pmatrix}
      L^- \\
      L^+
    \end{pmatrix}
    (Q_0^- - Q_0^+)
  \]
  and consequently the coupling data is given by
  \[
    Q_R = Q_0^- - R^- L^-  (Q_0^- - Q_0^+), \quad Q_R = Q_0^+ + R^+ L^+  (Q_0^- - Q_0^+),
    \]
    or equivalently
\begin{subequations}\label{eq:couplingdata}
\begin{align}
  U_R = U_L &= \frac 1 2 (U_0^- + U_0^+) + \frac 1 2 (\sqrt{A})^{-1}(V_0^- - V_0^+), \\
  V_R = V_L &= \frac 1 2 (V_0^- + V_0^+) + \frac 1 2 \sqrt{A} (U_0^- - U_0^+).
\end{align}
\end{subequations}
The relation between trace and coupling data inferred by ~\eqref{eq:laxcondition} and~\eqref{eq:couplingconditionsimplified} therefore defines a RS with easy-to-compute coupling data. In general the coupling condition~\eqref{eq:kirchhoffcouplingcondition} does not lead to consistency with the original Kirchhoff coupling problem, in which~\eqref{eq:hyperbolicsystem11} is endowed with the coupling condition
\[
 \Psi_U^K[U(t, 0^-), U(t, 0^+)] = F_1(U(t, 0^-)) -F_2(U(t, 0^+)) = 0
\]
This is due to the lower $n$ components in~\eqref{eq:kirchhoffcouplingcondition} that in general impose additional conditions in the relaxation limit. Instead concluding from~\eqref{eq:linearcouplinginu} consistency is given in case of the coupling condition
\[
  \Psi_U[U(t, 0^-), U(t, 0^+)] =
  \begin{pmatrix}
    U(t, 0^-) -  U(t, 0^+) \\ F_1(U(t, 0^-)) -F_2(U(t, 0^+)).
  \end{pmatrix}
\]
\end{Ex}

\subsection{Discrete relaxation limit}\label{sec:discretelimit}
Similarly as for the continuous problem in Section~\ref{sec:continuouslimit}, we can analyze the relaxation limit of the RS. Therefore, we assume that the trace data $Q_0=(Q_0^-, Q_0^+)$ can be asymptotically expanded around the state $Q_0^0=\left( Q_0^{0,-}, Q_0^{0,+} \right)$ as
\begin{equation}
Q_0^- = Q_0^{0,-} + \sum_{k=1}^\infty \varepsilon^k Q_0^{k,-}, \qquad Q_0^+ = Q_0^{0,+}  + \sum_{k=1}^\infty \varepsilon^k Q_0^{k,+}.
\end{equation}

Moreover, we assume in analogy to Lemma~\ref{lem:implicit} that $\Psiq$ is continuously differentiable and that for all $Q_0$ there is a unique $(\Sigma^-(Q_0), \Sigma^+(Q_0))=\Sigma\in \R^{2n}$ satisfying $\Psiqp[\Sigma; Q_0]=0$ and the Jacobian $D_\Sigma\Psiqp[\Sigma; Q_0]$ is invertible. From the parametrization~\eqref{eq:couplingdataparameterized} we obtain
\begin{equation}\label{eq:limitpara}
  Q_R(\Sigma; Q_0) = Q_0^{-} + R_1^- \Sigma^-(Q_0), \qquad Q_L(\Sigma; Q_0) = Q_0^{+} + R_2^+ \Sigma^+(Q_0),
\end{equation}
Due to the implicit function theorem, see the proof of Lemma~\ref{lem:implicit}, $\Sigma^-$ and $\Sigma^+$ are continuously differentiable functions in $Q_0$ and by Taylor expansion
\begin{equation}
\Sigma^\mp(Q_0) = \Sigma^\mp(Q_0^0) + \mathcal{O}(\varepsilon)
\end{equation}
holds. Hence, we obtain by substitution in~\eqref{eq:limitpara}
\begin{equation}
  Q_R(\Sigma; Q_0) = Q_0^{0,-} + R_1^- \Sigma^-\left(Q_0^0\right) + \mathcal{O}(\varepsilon), \quad Q_L(\Sigma; Q_0) = Q_0^{0,+} + R_2^+ \Sigma^+\left(Q_0^0\right) + \mathcal{O}(\varepsilon)
\end{equation}
or, in different terms
\begin{equation}
  \mathcal{RS}(Q_0^-, Q_0^+) = \mathcal{RS}\left(Q_0^{0,-}, Q_0^{0,+}\right) + \mathcal{O}(\varepsilon).
\end{equation}
Using a Taylor expansion of $\Psiq$ we further see that
\begin{equation*}
  \Psiq[Q_R(\Sigma(Q_0); Q_0), Q_L(\Sigma(Q_0); Q_0)] = \Psiq[Q_R(\Sigma(Q_0^0); Q_0^0), Q_L(\Sigma(Q_0^0); Q_0^0)] + \mathcal{O}(\varepsilon).
\end{equation*}
and thus, in the limit $\varepsilon \rightarrow 0$ the coupling data satisfies the coupling condition, i.e.,
\begin{equation}
  \Psiq[Q_R(\Sigma(Q_0^0); Q_0^0), Q_L(\Sigma(Q_0^0); Q_0^0)] = 0.
\end{equation}

\section{The central scheme}\label{sec:schemes}
\begin{figure}
  \begin{tikzpicture}[x=\linewidth/25,y=\linewidth/25]
\draw[->, thin] (-12,0) -- (12,0) node[right] {$x$};
\node[label=below:$x_{-5/2}$] at (-10, 0){$|$};
\node[label=below:$I_{-2}$] at (-7.5, 0){};
\node[label=above:$Q_{-2}$] at (-7.5, 0){};
\node[label=below:$x_{-3/2}$] at (-5, 0){$|$};
\node[label=below:$I_{-1}$] at (-2.5, 0){};
\node[label=above:$Q_{-1}$] at (-2.5, 0){};
\node[label=above:$ \color{gray}Q_{R}$] at (-0.75, 0){};
\node[label=below:{$x_{-1/2}=0$}] at (0, 0){$|$};
\node[label=above:$ \color{gray}Q_{L}$] at (0.75, 0){};
\node[label=below:$I_0$] at (2.5, 0){};
\node[label=above:$Q_0$] at (2.5, 0){};
\node[label=below:$x_{1/2}$] at (5, 0){$|$};
\node[label=below:$I_1$] at (7.5, 0){};
\node[label=above:$Q_1$] at (7.5, 0){};
\node[label=below:$x_{3/2}$] at (10, 0){$|$};
\draw[-, thin, dashed] (0, 0) -- (0, 4) node[label=above:{$\Psi[Q_R, Q_L]=0$}] {};
\coordinate (A) at (-5,1);
\node[label=above:{$\begin{aligned}
  \ddt Q + S_1 \, \ddx Q = \frac{1}{\varepsilon}
                      \begin{pmatrix}
                        0 \\
                        F_1(U) - V
                      \end{pmatrix}
                     \end{aligned}$}] at (-6, 1.5) {};
\node[label=above:{$\begin{aligned}
                      \ddt Q + S_2 \, \ddx Q = \frac{1}{\varepsilon}
                      \begin{pmatrix}
                        0 \\
                        F_2(U) - V
                      \end{pmatrix}
                     \end{aligned}$}] at (6, 1.5) {};
\end{tikzpicture}
  \caption{The relaxation system in the 1-to-1 coupling case on the discretized real line.}\label{fig:one-to-one-num}
\end{figure}
We consider a discretization of the real line into uniform mesh cells $I_j=(x_{j-1/2}, x_{j+1/2})$ of width $\Delta x$ with origin located at the cell interface $x_{-1/2}$ as indicated by Figure~\ref{fig:one-to-one-num}. Furthermore, the time line is partitioned into the instances $t^n=\sum_{k=1}^n \Delta t^n$ for some time increments $\Delta t^n>0$, which for brevity we assume uniform and denote by $\Delta t$ throughout this section. Let $Q_j^n$, $U_j^n$, and $V_j^n$ denote a piecewise constant numerical solution of system~\eqref{eq:relsyst11} in terms of cell averages over cell $I_j$ at time $t^n$.

\subsection{Relaxation scheme}
An asymptotic preserving scheme for the coupled relaxation system~\eqref{eq:relsyst11} and coupling condition~\eqref{eq:relaxationcoupling} is obtained by combining the first order upwind discretization with an implicit-explicit time discretization as in~\cite{herty2023centr, hu2017asymppreserschem}. Away from the interface the scheme reads
\begin{subequations}\label{eq:relscheme11}
  \begin{align}
        U^{n+1}_{j} &= U^{n}_{j} - \frac{\Delta t}{2 \Delta x}\left( V^{n}_{j+1} - V^{n}_{j-1}\right) + \frac{\Delta t}{2 \Delta x} \sqrt{A_i} \left( U^{n}_{j+1} - 2 U^{n}_{j} + U^{n}_{j-1}\right), \\
    V^{n+1}_{j} &= V^{n}_{j} - \frac{\Delta t}{2 \Delta x} A_i \left( U^{n}_{j+1} - U^{n}_{j-1}\right) + \frac{\Delta t}{2 \Delta x} \sqrt{A_i}\left( V^{ n}_{j+1} - 2 V^{n}_{j} + V^{n}_{j-1}\right) + \frac {\Delta t} \eps \left( F_i(U^{n+1}_{j}) - V^{n+1}_{j} \right),\label{eq:unsplitv0}
  \end{align}
for $j\in \Z \setminus \{0, -1\}$, where $i=1$ if $j<0$ and $i=2$ if $j>0$. For the cell that is left next to the coupling interface we have
  \begin{align}
        U^{n+1}_{-1} &= U^{n}_{-1} - \frac{\Delta t}{2 \Delta x}\left( V^{n}_{R} - V^{n}_{-2}\right) + \frac{\Delta t}{2 \Delta x} \sqrt{A_1} \left( U^{n}_{R} - 2 U^{n}_{-1} + U^{n}_{-2}\right), \\
    V^{n+1}_{-1} &= V^{n}_{-1} - \frac{\Delta t}{2 \Delta x} A_1 \left( U^{n}_{R} - U^{n}_{-2}\right) + \frac{\Delta t}{2 \Delta x} \sqrt{A_1}\left( V^{ n}_{R} - 2 V^{n}_{-1} + V^{n}_{-2}\right) + \frac {\Delta t} \eps \left( F_1(U^{n+1}_{-1}) - V^{n+1}_{-1} \right),\label{eq:unsplitvl}
  \end{align}
  and on the opposite site, for the cell right next to the coupling interface the scheme reads
    \begin{align}
        U^{n+1}_{0} &= U^{n}_{0} - \frac{\Delta t}{2 \Delta x}\left( V^{n}_{1} - V^{n}_{L}\right) + \frac{\Delta t}{2 \Delta x} \sqrt{A_2} \left( U^{n}_{1} - 2 U^{n}_{0} + U^{n}_{L}\right), \\
    V^{n+1}_{0} &= V^{n}_{0} - \frac{\Delta t}{2 \Delta x} A_2 \left( U^{n}_{1} - U^{n}_{L}\right) + \frac{\Delta t}{2 \Delta x} \sqrt{A_2}\left( V^{ n}_{1} - 2 V^{n}_{0} + V^{n}_{L}\right) + \frac {\Delta t} \eps \left( F_2(U^{n+1}_{0}) - V^{n+1}_{0} \right).\label{eq:unsplitv2}
    \end{align}
\end{subequations}
The discrete coupling data employed in scheme~\eqref{eq:relscheme11} is obtained applying the RS from Section~\ref{sec:RS} to the cell averages adjacent to the interface, i.e.,
\begin{equation}
  \mathcal{RS}( (U_{-1}^n, V_{-1}^n), (U_0^n, V_0^n)) := ((U_R^n, V_R^n), (U_L^n, V_L^n)).
\end{equation}

Scheme~\eqref{eq:relscheme11} can be rewritten in the form
\begin{subequations}
\begin{align}
  U^{n+1}_{j} &= U^{n}_{j} - \frac{\Delta t}{\Delta x}\left( F_{j+1/2}^{n,-} - F_{j-1/2}^{n,+} \right) \label{eq:relaxationschemeu},\\
  V^{n+1}_{j} &= V^{n}_{j} - \frac{\Delta t}{\Delta x}\left( G_{j+1/2}^{n,-} - G_{j-1/2}^{n,+} \right) + \frac{\Delta t}{\varepsilon} \left( F_i(U^{n+1}_j) - V^{n+1}_j)\right), \label{eq:relaxationschemev}
\end{align}
\end{subequations}
where $i=1$ if $j<0$ and $i=2$ if $j\geq 0$. We distinguish between incoming numerical fluxes from the left interface (indicated by a plus sign) and numerical fluxes outgoing through the right interface (indicated by a minus sign). Away from the interface they are equal as we have
\begin{subequations}
\begin{align}
  F_{j-1/2}^{n,+} = F_{j-1/2}^{n,-} =  \frac 1 2 \, (V_{j-1}^n + V_{j}^n)  - \frac 1 2  \sqrt{A_i} (U_j^n-U_{j-1}^n),\\
  G_{j-1/2}^{n,+} = G_{j-1/2}^{n,-} =  \frac 1 2 A_i  (U_{j-1}^n + U_{j}^n)  - \frac 1 2  \sqrt{A_i} (V_j^n-V_{j-1}^n) \label{eq:vflux}           
\end{align}
for $j\in \Z\setminus\{ 0\}$ and $i$ chosen according to the sign of $j$ as in~\eqref{eq:relscheme11}. At the interface the scheme is not necessarily conservative since the numerical fluxes are given by
\begin{align}
  F_{-1/2}^{n,-} &=  \frac 1 2 \, (V_{-1}^n + V_{R}^n)  - \frac 1 2  \sqrt{A_1} (U_R^n-U_{-1}^n),\\
  F_{-1/2}^{n,+} &=  \frac 1 2 \, (V_{L}^n + V_{0}^n)  - \frac 1 2  \sqrt{A_2} (U_0^n-U_{L}^n),\\
  G_{-1/2}^{n,-} &=  \frac 1 2 A_1  (U_{-1}^n + U_{R}^n)  - \frac 1 2  \sqrt{A_1} (V_R^n-V_{-1}^n),\\
  G_{-1/2}^{n,+} &=  \frac 1 2 A_2  (U_{L}^n + U_{0}^n)  - \frac 1 2  \sqrt{A_2} (V_0^n-V_{L}^n).            
\end{align}          
\end{subequations}

\subsection{Relaxed scheme}\label{sec:relaxedscheme}
In this section, using asymptotic analysis as done in Section~\ref{sec:continuouslimit} and~\ref{sec:discretelimit} we consider the relaxation limit of the relaxation scheme. The cell averages $U_j^n$ and $V_j^n$ occuring in the schemes above depend on the relaxation rate $\varepsilon$. We assume that for all $j$ and $n$ they can be asymptotically expanded around the zero-relaxation states $U_j^{n,0}$ and $V_j^{n,0}$ for sufficiently small relaxation rates as
\begin{subequations}\label{eq:expansion}
\begin{align}
  U_j^{n} = U_j^{n,0} + \varepsilon U_j^{n,1} + \mathcal{O}(\varepsilon^2),\\
  V_j^{n} = V_j^{n,0} + \varepsilon V_j^{n,1} + \mathcal{O}(\varepsilon^2).
\end{align}
\end{subequations}
At first we consider the relaxation limit away from the interface. We fix $j\in \Z\setminus \{ -1, 0\}$ and choose $i$ according to the sign of $j$ as in~\eqref{eq:relscheme11} and then substitute the expansions~\eqref{eq:expansion} into~\eqref{eq:relaxationschemev}. After Taylor expansion we obtain
\begin{equation}\label{eq:vexpansion}
  \begin{split}
    V^{n+1, 0}_{j} \left(1 + \frac{\Delta t}{\varepsilon} \right) &= V^{n, 0}_{j} - \frac{\Delta t}{\Delta x}\left( G_{j+1/2}^{n,+, 0} - G_{j-1/2}^{n,-, 0} \right) \\ &\quad + \frac{\Delta t}{\varepsilon} \left( F_i(U^{n+1, 0}_j)  + \varepsilon DF_i(U^{n+1, 0}_j)~U_j^{n+1,1} - \varepsilon V_j^{n,1} \right) + \mathcal{O}(\varepsilon),
    \end{split}
\end{equation}
where $DF_i$  denotes the Jacobian of $F_i$ and  $G_{j-1/2}^{n,\mp, 0}$= $G_{j-1/2}^{n,\mp, 0}\left((U^{n,0}_{j-1}, U^{n,0}_{j-1}), (U^{n,0}_{j}, U^{n,0}_{j})  \right)$ the numerical flux equivalents to~\eqref{eq:vflux} at the zero-relaxation state. Due to the expansion
\begin{equation}
 \left( 1 + \frac{\Delta t}{\varepsilon} \right)^{-1} = \frac{\varepsilon}{\varepsilon + \Delta t} = \frac{\varepsilon}{\Delta t} + \mathcal{O}(\varepsilon^2) 
\end{equation}
we obtain by multiplication in~\eqref{eq:vexpansion}
\begin{equation}\label{eq:vlimitpre}
   V^{n+1, 0}_{j} = F_i(U^{n+1, 0}_j) + \mathcal{O}(\varepsilon) \qquad \text{for all }n\in \N_0.
 \end{equation}
 Substituting now the asymptotic expansion~\eqref{eq:expansion} into~\eqref{eq:relaxationschemeu} and taking into account~\eqref{eq:vlimitpre} we obtain
 \begin{equation}\label{eq:asymptoticlimitscheme}
   U^{n+1,0}_{j} = U^{n,0}_{j} - \frac{\Delta t}{\Delta x}\left( F_{j+1/2}^{n,+,0} - F_{j-1/2}^{n,-,0} \right)  + \mathcal{O}(\varepsilon),
 \end{equation}
 where the numerical fluxes take the form
 \begin{equation}\label{eq:asymptoticfluxes}
     F_{j-1/2}^{n,+,0} = F_{j-1/2}^{n,-,0} =  \frac 1 2 \, (F_i(U_{j-1}^{n,0}) + F_i(U_{j}^{n,0}))  - \frac 1 2  \sqrt{A_i} (U_j^{n,0}-U_{j-1}^{n,0}).
 \end{equation}
In the relaxation limit the $\mathcal{O}(\varepsilon)$ terms in~\eqref{eq:vlimitpre} and ~\eqref{eq:asymptoticlimitscheme} vanish, which gives rise to the relaxed scheme on both half-axes.  

 Next, we investigate the relaxation limit at the interface. We assume that in case of the asymptotic expansion~\eqref{eq:expansion} the RS~\eqref{eq:RSdef} satisfies
 \begin{equation}\label{eq:RSlimitassumption}
  \mathcal{RS}( (U_{-1}^n, V_{-1}^n), (U_0^n, V_0^n)) =  \mathcal{RS}_{0}( (U_{-1}^{n,0}, V_{-1}^{n,0}), (U_0^{n,0}, V_0^{n,0})) + \mathcal{O}(\varepsilon)
\end{equation}
for some mapping
\[
  \mathcal{RS}_{0}: \R^{2n} \times \R^{2n} \rightarrow \R^{2n} \times \R^{2n}.
\]
In Section~\ref{sec:discretelimit} we have shown that under suitable conditions on $\Psi_Q$ condition~\eqref{eq:RSlimitassumption} holds for $\mathcal{RS}_0=\mathcal{RS}$, i.e., the coupling in the limit is determined by the original RS. We note however, that in general $\mathcal{RS}_{0}$ may not necessarily be a RS in the sense it was introduced in Section~\ref{sec:halfRiemann}.

 Condition~\eqref{eq:RSlimitassumption} allows us to define the limit coupling data
  \begin{equation}\label{eq:RSasymptotic}
  \mathcal{RS}_{0}( (U_{-1}^{n,0}, V_{-1}^{n,0}), (U_0^{n,0}, V_0^{n,0})) =: ((U_R^{n,0}, V_R^{n,0}), (U_L^{n,0}, V_L^{n,0}))
\end{equation}
and therefore also the interface fluxes
\begin{subequations}
\begin{align}
  F_{-1/2}^{n,-,0} &=  \frac 1 2 \, (V_{-1}^{n,0} + V_{R}^{n,0})  - \frac 1 2  \sqrt{A_1} (U_R^{n,0}-U_{-1}^{n,0}),\\
  F_{-1/2}^{n,+,0} &=  \frac 1 2 \, (V_{L}^{n,0} + V_{0}^{n,0})  - \frac 1 2  \sqrt{A_2} (U_0^{n,0}-U_{L}^{n,0}),\\
  G_{-1/2}^{n,-, 0} &=  \frac 1 2 A_1  (U_{-1}^{n,0} + U_{R}^{n,0})  - \frac 1 2  \sqrt{A_1} (V_R^{n,0}-V_{-1}^{n,0}),\\
  G_{-1/2}^{n,+, 0} &=  \frac 1 2 A_2  (U_{L}^{n,0} + U_{0}^{n,0})  - \frac 1 2  \sqrt{A_2} (V_0^{n,0}-V_{L}^{n,0}).            
\end{align}          
\end{subequations}
Combining these fluxes with the above arguments and using assumption~\eqref{eq:RSlimitassumption} it is now straightforward to show that~\eqref{eq:vlimitpre} and~\eqref{eq:asymptoticlimitscheme} also hold for $j=-1$ and $j=0$.  Eventually we take the relaxation limit $\eps \rightarrow 0$, drop the index $0$ and obtain the \emph{coupled relaxed} or \emph{central scheme} for system~\eqref{eq:relsyst11} and coupling condition~\eqref{eq:relaxationcoupling} in the relaxation limit, which reads
\begin{subequations}\label{eq:relaxedscheme}
\begin{equation}\label{eq:conservativeform}
  U_j^{n+1} = U_j^n - \frac{\Delta t }{\Delta x} \left( H_{j+1/2}^{n,-} -  H_{j-1/2}^{n,+}\right) \quad \text{for all }j \in \Z
\end{equation}
and employs the numerical fluxes
\begin{align}
   H_{j-1/2}^{n,+} = H_{j-1/2}^{n,-} =  \frac 1 2 \, (F_i(U_{j-1}^n) + F_i(U_{j}^n))  - \frac 1 2  \sqrt{A_i} (U_j^n-U_{j-1}^n)
\end{align}
for $j\in \Z\setminus \{0\}$ and $i$ chosen according to the sign of $j$. At the interface we have
\begin{align}
  H_{-1/2}^{n,-} &=  \frac 1 2 \, (F_1(U_{-1}^n) + V_{R}^n)  - \frac 1 2  \sqrt{A_1} (U_R^n-U_{-1}^n)\\
  H_{-1/2}^{n,+} &=  \frac 1 2 \, (V_{L}^n + F_2(U_{0}^n))  - \frac 1 2  \sqrt{A_2} (U_0^n-U_{L}^n),
\end{align}
where the variables $V_R^n=V_R^n(U_{-1}^n, U_{0}^n)$, $V_L^n=V_L^n(U_{-1}^n, U_{0}^n)$ depend only on the cell averages of the state variable next to the interface as part of the coupling data
  \begin{equation}\label{eq:RSlimit}
      \mathcal{RS}_{0}( (U_{-1}^n, F_1(U_{-1}^n), (U_0^n, F_2(U_0^n))) =: ((U_R^n, V_R^n), (U_L^n, V_L^n)).
    \end{equation}
  \end{subequations}
  
  \begin{Rmk}\label{rmk:limitrs}
   Suppose that $\Psi_Q$ is continuously differentiable, for all $Q_0$ there is a unique $\Sigma\in\R^n$ satisfying $\Psiqp[\Sigma; Q_0]=0$ and the Jacobian $D_\Sigma\Psiqp[\Sigma; Q_0]$ is invertible. Then following from Section~\ref{sec:discretelimit} the RS for the coupling condition~\eqref{eq:psigeneral} is well defined and $\mathcal{RS}_{0} \coloneqq \mathcal{RS}$ satisfies~\eqref{eq:RSlimitassumption} and defines a limit scheme.
  \end{Rmk}  

  \begin{Ex}\label{ex:kirchhoffscheme}
    If Kirchhoff conditions are assumed for $\Psi_Q$ together with $A_1=A_2=A$ as discussed in Example~\ref{ex:KirchhoffCoupling} we can substitute the explicit formulas of the coupling data and obtain a conservative scheme of the form~\eqref{eq:conservativeform} with the numerical fluxes
    \begin{equation}
      H_{j-1/2}^{n,+}=  H_{j-1/2}^{n,-} = \begin{cases}
                                            \frac 1 2 \, (F_1(U_{j-1}^n) + F_1(U_{j}^n))  - \frac 1 2  \sqrt{A} (U_j^n-U_{j-1}^n)  &\text{if }j<0, \\[5pt]
                                            \frac 1 2 \, ( F_1(U_{-1}^n) + F_2(U_{0}^n))  - \frac 1 2 \sqrt{A} (U_0^n-U_{-1}^n) &\text{if }j=0, \\[5pt]
                                            \frac 1 2 \, ( F_2(U_{j-1}^n) + F_2(U_{j}^n))  - \frac 1 2 \sqrt{A} (U_j^n-U_{j-1}^n) &\text{if }j>0. 
                                          \end{cases}
                                        \end{equation}
This follows from generalizing the scalar approach in~\cite{herty2023centr} to the system case.
\end{Ex}

\section{Case study: the $p$-system}\label{sec:psystem}
In this section we apply the developed approach to a problem arising in gas transportation networks considered in~\cite{herty2019couplcompreulerequat}. Motivated by recent significant increases of gas-fired electrical generation in parts of the USA, see e.g.~\cite{zlotnik2016contr}, the modeling of high-pressure gas transportation networks needs to account for the influence of gas-fired power generators. The gas flow through pipelines has been effectively modeled using the compressible Euler equations~\cite{brouwer2011gas}. Neglecting temperature dynamics the $p$-system is used as a simplified model of gas dynamics, see e.g.,~\cite{herty2010}, given by
\begin{subequations}\label{eq:psystem}
  \begin{align}
    \ddt \rho + \ddx(\rho v) &= 0, \\
    \ddt (\rho v) + \ddx(\rho v^2 + p(\rho)) &=0
  \end{align}
and accounting for the conservation of density $\rho$ and momentum $\rho v$.  As closure for the system we impose the $\gamma$-law for the pressure function
  \begin{equation}
p(\rho) = \alpha \rho^{\gamma}
\end{equation}
\end{subequations}
with parameters $\alpha>0$ and $\gamma\geq0$. To account for the gas-fired power generation we consider~\eqref{eq:psystem} on both real half-axes coupled at an interface located at $x=0$ in analogy to~\eqref{eq:hyperbolicsystem11}. In this setting a gas turbine acting as the power generator is assumed at the interface. The impact of the turbine on the gas network is modeled by conditions that leave the pressure of the transmitted gas constant but account for a loss in momentum~\cite{chertkov2015pressfluctnatur} giving rise to the coupling conditions
\begin{subequations}\label{eq:psystemjumpcondition}
\begin{align} 
  \rho_R v_R &= \rho_L v_L + \mathcal{E}, \label{eq:rhovjump}\\
 p(\rho_R) &=  p(\rho_L) \label{eq:psystemjumpconditionpressure}
\end{align}
\end{subequations}
 which govern the coupling function $\Psi_U$. In condition~\eqref{eq:rhovjump} the parameter $\mathcal{E}\leq0$ controls the momentum outtake due to the turbine. As the density $\rho$ is assumed nonnegative we note that assuming $\gamma>0$ condition~\eqref{eq:psystemjumpconditionpressure} imposing continuity of the pressure at the interface is equivalent to
\begin{equation}\label{eq:psystemcontinuousdensity}
\rho_R = \rho_L.
\end{equation}

\subsection{Relaxation form}\label{sec:psystemrelaxation}
Introducing the two auxiliary scalar variables $V_1$, $V_2$ and the relaxation rate $\varepsilon$ the relaxation system corresponding to the coupled $p$-system~\eqref{eq:psystem} takes the form
\begin{subequations}\label{eq:psystemrelaxation}
  \begin{align}
    \ddt \rho + \ddx(V_1) &= 0, \\
    \ddt (\rho v) + \ddx(V_2) &=0, \\
    \ddt V_1 + a \ddx \rho &= \frac 1 \varepsilon (\rho v - V_1), \\
    \ddt V_2 + a \ddx (\rho v)  &= \frac 1 \varepsilon (\rho v^2 + p(\rho) - V_2).
  \end{align}
\end{subequations}
As system~\eqref{eq:psystem} also its relaxation variant~\eqref{eq:psystemrelaxation} is imposed on both real half-axes coupled at an interface located at $x=0$. Compared to the general relaxation form~\eqref{eq:relsyst11} the parameters $a_i^j$ are chosen independent of both the half-axis and the system component and set to the square of the maximal speed of sound, i.e.,
\[
  a = \max_{\rho \in[0, \rho^\text{max}]} p^\prime(\rho).
  \]
In vector form system~\eqref{eq:psystemrelaxation} uses the variables $U=(\rho, \rho v)$ and $V=(V_1, V_2)$ and the matrix $A=aI$.

We are interested in identifying a suitable coupling function $\Psi_Q$ for the coupled relaxation system~\eqref{eq:psystemrelaxation}. The implied condition should be such that in the limit $\varepsilon \rightarrow 0$ the dynamics of the original coupled $p$-system given by~\eqref{eq:psystem} and~\eqref{eq:psystemjumpcondition} are attained. At first we look at linear coupling conditions of the type 
\begin{subequations}\label{eq:psystemrelaxationcoupling}
  \begin{align}
    \rho_R &= \rho_L, \label{eq:relaxationrhocondition}\\
    \rho_R v_R &= \rho_L v_L + \beta_1 \mathcal{E}, \label{eq:rhovjumpagain}\\ 
     V_{1R} &= V_{1L} + \beta_2 \mathcal{E}, \label{eq:v1condition}\\
    V_{2R} &= V_{2L} \label{eq:v2continuity}
  \end{align}
\end{subequations}
for $\beta_1$, $\beta_2\in\{0,1\}$. As first approach motivated from~\eqref{eq:rhovjump} and~\eqref{eq:psystemcontinuousdensity} we consider the case $\beta_1=1$ and $\beta_2=0$.
The conditions~\eqref{eq:v1condition} (with $\beta_2=0$) and~\eqref{eq:v2continuity} for the auxiliary variable $V$ have been chosen assuming continuity of the auxiliary variable.
Noting that in the relaxation limit it holds  $V_{1}= \rho v$, the jump condition~\eqref{eq:rhovjump} can also be implemented in~\eqref{eq:v1condition}. This motivates our second approach, for which we choose $\beta_1=0$ and $\beta_2=1$. Our third approach combines the two former ones by setting $\beta_1=\beta_2=1$.

By our analysis in Section~\ref{sec:continuouslimit} and the linearity of the coupling approaches 1--3 the auxiliary variables satisfy in the relaxation limit
\begin{equation}
   V_{1R} = \rho_R v_R, \quad V_{1L} = \rho_L v_L, \quad V_{2R} = \rho_R (v_R)^2 + p(\rho_R), \quad V_{2L} = \rho_L (v_L)^2 + p(\rho_L).
\end{equation}
This limit condition shows that both our first and our second coupling approach for system~\eqref{eq:psystemrelaxation} lead to a contradiction in~\eqref{eq:rhovjumpagain} and~\eqref{eq:v1condition} in the relaxation limit due to $\beta_1\neq \beta_2$. Our third approach corrects this, however due to~\eqref{eq:v2continuity} this approach as well as the two former ones impose a condition in the limit that is not implied by the original coupling condition~\eqref{eq:psystemjumpcondition}. In more details, this additional condition imposes $v_R=v_L$, whenever $\rho_R v_R= \rho_Lv_L \neq 0$. Consequently, our three coupling approaches within~\eqref{eq:psystemrelaxationcoupling} cannot lead to consistency of the relaxation system~\eqref{eq:psystemrelaxationcoupling} with the p-system~\eqref{eq:psystem} and coupling condition~\eqref{eq:psystemjumpcondition} in the sense of Definition~\ref{def:consistency}.

We aim to design a coupling condition that leads to consistency by taking our third approach as a starting point and deriving a suitable replacement for condition~\eqref{eq:v2continuity}. Therefore we assume~\eqref{eq:rhovjump},~\eqref{eq:psystemcontinuousdensity} as well as $\rho_R=\rho_L\neq 0$ and find
\begin{equation}
\rho_R v_R^2 + p(\rho_R) = \frac{\rho_R^2 v_R^2}{\rho_R} + p(\rho_R) = \frac{(\rho_L v_L+ \mathcal{E})^2 }{\rho_R} + p(\rho_L) = \rho_L v_L^2 + p(\rho_L) + \mathcal{E}\, \frac{ 2 \rho_L v_L + \mathcal{E}}{\rho_L}.
\end{equation}
This motivates the new condition
\begin{equation}\label{eq:nonlinearV2condition}
V_{2R} = V_{2L} + \mathcal{E}\, \frac{ 2 \rho_L v_L + \mathcal{E}}{\rho_L},
\end{equation}
which together with~\eqref{eq:relaxationrhocondition},~\eqref{eq:rhovjumpagain},~\eqref{eq:v1condition} and $\beta_1=\beta_2=1$ determines our fourth coupling approach. By construction the relaxation system~\eqref{eq:psystemrelaxationcoupling} under this condition is consistent with the coupled $p$-system given by~\eqref{eq:psystem} and~\eqref{eq:psystemjumpcondition}. Unlike the previous approaches this condition is nonlinear.

\subsection{Riemann solvers}
In this section we analyze the Riemann solvers for the different coupling conditions considered in Section~\ref{sec:psystemrelaxation} and provide explicit forms for the implied coupling data,

\paragraph{Coupling approaches 1--3} The first three coupling conditions given by~\eqref{eq:psystemrelaxationcoupling} and different choices for $\beta_1$ and $\beta_2$ fit into the linear framework discussed in Section~\ref{sec:linearcoupling} and correspond to
\begin{equation}
  B_R = I, \qquad B_L = I, \qquad P =
  \begin{pmatrix}
    0 & \beta_1 \, \mathcal{E} & \beta_2 \, \mathcal{E} & 0
  \end{pmatrix}^T
\end{equation}
in the coupling function~\eqref{eq:linearcoupling}. To compute the coupling data from the RS we conclude from Remark~\ref{rmk:linearblockmatrices} that the system matrix in~\eqref{eq:sigmaexplicit} is of the form
\begin{equation}
  B \coloneq B_R \tilde R^- - B_L \tilde R^+ =
 - \begin{pmatrix}
    ( \sqrt{A})^{-1} & ( \sqrt{A})^{-1} \\
    -I & I
   \end{pmatrix}
   =
   -\begin{pmatrix}
     \frac 1 2 \sqrt{A} & -\frac 1 2 I \\
     \frac 1 2 \sqrt{A} & \frac 1 2 I
   \end{pmatrix}^{-1}
   ,
\end{equation}
from which we determine the parameter $\Sigma^-$ and $\Sigma^+$ as
\begin{equation}\label{eq:psystemrelaxationcouplingsigma}
  \begin{pmatrix}
\Sigma^- \\ \Sigma^+
  \end{pmatrix}
  = B^{-1} (P + Q_0^+ - Q_0^{-}), \qquad
  \Sigma^\mp = - \frac 1 2 \sqrt{A}(P^U + U_0^+ - U_0^-) \pm \frac 1 2 (P^V + V_0^+ - V_0^-).
\end{equation}
Here $P^U=(0, \beta_1 \mathcal{E})$ and $P^V=(\beta_2 \mathcal{E}, 0)$ consist of the upper two and lower two entries in $P$, respectively. The trace data has the form
\begin{equation}\label{eq:psystemrelaxationtracedata}
  Q_0^\mp =
  \begin{pmatrix}
U_0^\mp \\ V_0^\mp
  \end{pmatrix}
  =
  \begin{pmatrix}
  \rho_0^\mp & \rho_0^\mp v_0^\mp & V_{1,0}^\mp & V_{2,0}^\mp
  \end{pmatrix}^T.
\end{equation}
Eventually, the coupling data, for which we assume a notation analogue to~\eqref{eq:psystemrelaxationtracedata}, is given by~\eqref{eq:affinelinearrsoutput} implying
\begin{subequations}\label{eq:psystemrelaxationcouplingdata}
\begin{align}
  U_R &= \frac 1 2 (U_0^- + U_0^+ + P^U) - \frac 1 2 (\sqrt{A})^{-1}(V_0^+ - V_0^- + P^V), \\ 
  U_L &= \frac 1 2 (U_0^- + U_0^+ - P^U) - \frac 1 2 (\sqrt{A})^{-1}(V_0^+ - V_0^- + P^V), \\
  V_R &= \frac 1 2 \sqrt{A} (U_0^- - U_0^+ - P^U) + \frac 1 2 (V_0^+ + V_0^- + P^V),\\
  V_L &= \frac 1 2 \sqrt{A} (U_0^- - U_0^+ - P^U) + \frac 1 2 (V_0^+ + V_0^- - P^V).
\end{align}
\end{subequations}
Due to Remark~\ref{rmk:limitrs}, which applies here since $\Psi_Q$ is affine linear, the coupling data in the relaxation limit is determined by the same RS as in case of $\varepsilon>0$. Thus the data~\eqref{eq:psystemrelaxationcouplingdata} also satisfies the coupling condition~\eqref{eq:psystemrelaxationcoupling} in the limit $\varepsilon \rightarrow 0$.

\paragraph{Coupling approach 4} We next consider our fourth coupling approach, which introducing the notation $g^V(U_L) = \mathcal{E} (0, (2 \rho_L v_L + \mathcal{E})/\rho_L)^T$ can be written as
\begin{equation}
  \begin{pmatrix}
    U_R \\ V_R
  \end{pmatrix}
  =
  \begin{pmatrix}
    U_L \\ V_L
  \end{pmatrix}
  +
    \begin{pmatrix}
    P^U \\ P^V
    \end{pmatrix}
    +
      \begin{pmatrix}
    0 \\ g^V(U_L)
  \end{pmatrix}.
\end{equation}
To obtain coupling data we substitute the parametrization~\eqref{eq:couplingdataparameterized} and obtain
\begin{equation}\label{eq:solve4parametrization}
  \begin{pmatrix}
   - (\sqrt{A})^{-1} \\ I
  \end{pmatrix}
  \Sigma^- -
  \begin{pmatrix}
    (\sqrt{A})^{-1} \\ I
  \end{pmatrix}
  \Sigma^+ -
      \begin{pmatrix}
    0 \\ g^V(U_0^+ + (\sqrt{A})^{-1} \Sigma^+)
      \end{pmatrix}
      =
    \begin{pmatrix}
    U_0^+ - U_0^- + P^U \\ V_0^+ - V_0^- + P^V
    \end{pmatrix}.
\end{equation}
From the first two components of~\eqref{eq:solve4parametrization} we get
\begin{equation}\label{eq:solve4Ucomponents}
\Sigma^- =   \sqrt{A}(U_0^- -U_0^+ - P^U) -\Sigma^+,
\end{equation}
which substituted into the components three and four in~\eqref{eq:solve4parametrization} yields
\begin{equation}\label{eq:solve4Vcomponents}
- 2 \Sigma^+ + \sqrt{A}(U_0^- -U_0^+ - P^U) - g^V(U_0^+ + (\sqrt{A})^{-1} \Sigma^+) =  V_0^+ - V_0^- + P^V.
\end{equation}
Using the notation $\Sigma^+=(\sigma_1^+, \sigma_2^+)^T$ we note that the first component of~\eqref{eq:solve4Vcomponents} determines $\sigma_1^+$, which in turn substituted into the second component of~\eqref{eq:solve4Vcomponents} determines $\sigma_2^+$. Eventually $\Sigma^-$ is obtained from~\eqref{eq:solve4Ucomponents} and therefore a RS for approach four is defined. By the verified well-posedness of the RS and the differentiability of the corresponding coupling function Remark~\ref{rmk:limitrs} also applies to our fourth coupling approach and thus the above procedure also determines suitable coupling data in the relaxation limit.

\subsection{Numerical experiments}
We consider a numerical test case employing constant initial data $\rho_0=\rho_0 v_0=1$ and pressure given by $p(\rho)= 146\,820.4 \rho$, see Section 5.2.1 in~\cite{sikstel2020analy}. As computational domain we choose $[-200,200]$ and impose homogeneous Neumann boundary conditions. The momentum outtake at the interface is assumed time dependent and given by
\begin{equation}
  -\mathcal{E}(t) =
  \begin{cases}
    \min\{ 0.6t, 3t\} & \text{if }0\leq t < 0.
    \\ \max \{ 0, -3t + 1.5\} & \text{if } t \geq 0.3.
  \end{cases}
\end{equation}
A reference solution for the coupled $p$-system~\eqref{eq:psystem} and coupling condition~\eqref{eq:psystemjumpcondition} is presented in~\cite{sikstel2020analy}. 

We employ the central scheme introduced in Section~\ref{sec:relaxedscheme} to numerically compute solutions for system~\eqref{eq:psystemrelaxation} in the limit $\varepsilon \rightarrow 0$ for the different coupling approaches considered in Section~\ref{sec:psystemrelaxation}. In the numerical computations we employ 1000 uniform mesh cells and time increments chosen according to the CFL condition
\begin{equation}
  \Delta t = \text{CFL} \frac{\Delta x}{\sqrt{a}}
\end{equation}
and Courant number $\text{CFL}=0.49$. Numerical results are presented in terms of density, momentum and pressure at the time instances $t=0.0716$, $0.2864$ and $0.55$.

\begin{figure}
  \begin{tikzpicture}
  \begin{groupplot}[
        group style={group size=3 by 3,
            horizontal sep = .8 cm, 
            vertical sep = .6 cm,
            xlabels at=edge bottom,
            xticklabels at=edge bottom,
            yticklabels at=edge right}, 
          width = .355 \linewidth,
          height = .22 \linewidth,
          xlabel={$x$},
          xmin=-200,xmax=200,
          every tick label/.append style={font=\scriptsize},
          title style={font=\scriptsize},
          label style={font=\scriptsize},
          ]
        \nextgroupplot[title={$t=0.0716$}, ylabel={$\rho$}, ymin=0.9995, ymax=1.0001]
        \addplot [] table [x index=0, y index=1] {input/psystem11jumpCPL1_2.dat};
        \nextgroupplot[title={$t=0.2864$}, ymin=0.9995, ymax=1.0001]
        \addplot [] table [x index=0, y index=1] {input/psystem11jumpCPL1_3.dat};
        \nextgroupplot[title={$t=0.55$},
        scaled y ticks=manual:{$\cdot 10^{-4} + 1$}{
          \pgfmathparse{10000*(#1-1)}
        },
        ymin=0.9995, ymax=1.0001]
        \addplot [] table [x index=0, y index=1] {input/psystem11jumpCPL1_4.dat};
        \nextgroupplot[ylabel={$\rho v$}, ymin=0.8, ymax=1.2]
        \addplot [] table [x index=0, y index=2] {input/psystem11jumpCPL1_2.dat};
        \nextgroupplot[ymin=0.8, ymax=1.2]
        \addplot [] table [x index=0, y index=2] {input/psystem11jumpCPL1_3.dat};
        \nextgroupplot[ymin=0.8, ymax=1.2]
        \addplot [] table [x index=0, y index=2] {input/psystem11jumpCPL1_4.dat};
        \nextgroupplot[ylabel={$p$}, ymin=146760, ymax=146830]
        \addplot [] table [x index=0, y index=3] {input/psystem11jumpCPL1_2.dat};
        \nextgroupplot[ymin=146760, ymax=146830]
        \addplot [] table [x index=0, y index=3] {input/psystem11jumpCPL1_3.dat};
        \nextgroupplot[ymin=146760, ymax=146830,
        scaled y ticks=manual:{$+ 146\,820$}{
          \pgfmathparse{(#1-146820)}
        }]
        \addplot [] table [x index=0, y index=3] {input/psystem11jumpCPL1_4.dat};
  \end{groupplot}
\end{tikzpicture}
\caption{Numerical solution for system~\eqref{eq:psystemrelaxation} in the relaxation limit using coupling approach~1 given by~\eqref{eq:psystemrelaxationcoupling}, $\beta_1=1$ and $\beta_2=0$. Peaks appears at the interface ($x=0$). The coupled relaxed scheme on $1000$ mesh cells with $\text{CFL}=0.49$ has been used. }\label{fig:psystemcpl1}
\end{figure}

In Figure~\ref{fig:psystemcpl1} we show results obtained by the first coupling approach given by~\eqref{eq:psystemrelaxationcoupling}, $\beta_1=1$ and $\beta_2=0$. In the numerical results we see a peak in density, momentum and pressure forming and vanishing at the interface as time evolves. Comparing to the reference solution this coupling condition imposed to the coupled relaxation system clearly cannot yield the solution of the original coupled $p$-system~\eqref{eq:psystem}. 
\begin{figure}
  \begin{tikzpicture}
  \begin{groupplot}[
        group style={group size=3 by 3,
            horizontal sep = .8 cm, 
            vertical sep = .6 cm,
            xlabels at=edge bottom,
            xticklabels at=edge bottom,
            yticklabels at=edge right}, 
          width = .355 \linewidth,
          height = .22 \linewidth,
          xlabel={$x$},
          xmin=-200,xmax=200,
          every tick label/.append style={font=\scriptsize},
          title style={font=\scriptsize},
          label style={font=\scriptsize}
          ]
        \nextgroupplot[title={$t=0.0716$}, ylabel={$\rho$}, ymin=0.9999, ymax=1.0014]
        \addplot [] table [x index=0, y index=1] {input/psystem11jumpCPL2_2.dat};
        \nextgroupplot[title={$t=0.2864$}, ymin=0.9999, ymax=1.0014]
        \addplot [] table [x index=0, y index=1] {input/psystem11jumpCPL2_3.dat};
        \nextgroupplot[title={$t=0.55$}, ymin=0.9999, ymax=1.0014,
        scaled y ticks=manual:{$\cdot 10^{-3} + 1$}{
          \pgfmathparse{1000*(#1-1)}
        }, ytick={1, 1.0005, 1.001}]
        \addplot [] table [x index=0, y index=1] {input/psystem11jumpCPL2_4.dat};
        \nextgroupplot[ylabel={$\rho v$}, ymin=0.65, ymax=1.35]
        \addplot [] table [x index=0, y index=2] {input/psystem11jumpCPL2_2.dat};
        \nextgroupplot[ymin=0.65, ymax=1.35]
        \addplot [] table [x index=0, y index=2] {input/psystem11jumpCPL2_3.dat};
        \nextgroupplot[ymin=0.65, ymax=1.35]
        \addplot [] table [x index=0, y index=2] {input/psystem11jumpCPL2_4.dat};
        \nextgroupplot[ylabel={$p$}, ymin=146810, ymax=147000]
        \addplot [] table [x index=0, y index=3] {input/psystem11jumpCPL2_2.dat};
        \nextgroupplot[ymin=146810, ymax=147000]
        \addplot [] table [x index=0, y index=3] {input/psystem11jumpCPL2_3.dat};
        \nextgroupplot[ymin=146810, ymax=147000,
        scaled y ticks=manual:{$+ 146\,820$}{
          \pgfmathparse{(#1-146820)}
        }, ytick={146820, 146895, 146970}]
        \addplot [] table [x index=0, y index=3] {input/psystem11jumpCPL2_4.dat};
  \end{groupplot}
\end{tikzpicture}
\caption{Numerical solution for system~\eqref{eq:psystemrelaxation} in the relaxation limit using coupling approach~2 given by~\eqref{eq:psystemrelaxationcoupling}, $\beta_1=0$ and $\beta_2=1$. Artifacts at the interface ($x=0$) are visible. The coupled relaxed scheme on $1000$ mesh cells with $\text{CFL}=0.49$ has been used.}\label{fig:psystemcpl2}
\end{figure}
Figure~\ref{fig:psystemcpl2} shows the numerical solution in case of the second coupling approach given by~\eqref{eq:psystemrelaxationcoupling}, $\beta_1=0$ and $\beta_2=1$.
While this solution qualitatively coincides with the reference solution for $x\neq0$ it exhibits layer artifacts at the interface at the time instances $0.0716$ and $0.2864$.
\begin{figure}
  \begin{tikzpicture}[spy using outlines={rectangle, red, magnification=5,
    size=0.7cm, connect spies}]
  \begin{groupplot}[
        group style={group size=3 by 3,
            horizontal sep = .8 cm, 
            vertical sep = .6 cm,
            xlabels at=edge bottom,
            xticklabels at=edge bottom,
            yticklabels at=edge right}, 
          width = .355 \linewidth,
          height = .22 \linewidth,
          xlabel={$x$},
          xmin=-200,xmax=200,
          every tick label/.append style={font=\scriptsize},
          title style={font=\scriptsize},
          label style={font=\scriptsize}
          ]
        \nextgroupplot[title={$t=0.0716$}, ylabel={$\rho$}, ymin=0.9999, ymax=1.0014]
        \addplot [] table [x index=0, y index=1] {input/psystem11jumpCPL3_2.dat};
        \nextgroupplot[title={$t=0.2864$}, ymin=0.9999, ymax=1.0014]
        \addplot [] table [x index=0, y index=1] {input/psystem11jumpCPL3_3.dat};
        \addplot [gray, dotted] table [x index=0, y index=1] {input/psystem11jumpCPL4_3.dat};
        \coordinate (spypoint) at (axis cs:0,1.00077); 
        \coordinate (magpoint) at (axis cs:100,1.001);
        \spy [red] on (spypoint) in node at (magpoint);
        \nextgroupplot[title={$t=0.55$}, ymin=0.9999, ymax=1.0014,
        scaled y ticks=manual:{$\cdot 10^{-3} + 1$}{
          \pgfmathparse{1000*(#1-1)}
        }, ytick={1, 1.0005, 1.001}]
        \addplot [] table [x index=0, y index=1] {input/psystem11jumpCPL3_4.dat};
        \nextgroupplot[ylabel={$\rho v$}, ymin=0.65, ymax=1.35]
        \addplot [] table [x index=0, y index=2] {input/psystem11jumpCPL3_2.dat};
        \nextgroupplot[ymin=0.65, ymax=1.35]
        \addplot [] table [x index=0, y index=2] {input/psystem11jumpCPL3_3.dat};
        \addplot [gray, dotted] table [x index=0, y index=2] {input/psystem11jumpCPL4_3.dat};
        \nextgroupplot[ymin=0.65, ymax=1.35]
        \addplot [] table [x index=0, y index=2] {input/psystem11jumpCPL3_4.dat};
        \nextgroupplot[ylabel={$p$}, ymin=146810, ymax=147000]
        \addplot [] table [x index=0, y index=3] {input/psystem11jumpCPL3_2.dat};
        \nextgroupplot[ymin=146810, ymax=147000]
        \addplot [] table [x index=0, y index=3] {input/psystem11jumpCPL3_3.dat};
        \addplot [gray, dotted] table [x index=0, y index=3] {input/psystem11jumpCPL4_3.dat};
        \nextgroupplot[ymin=146810, ymax=147000,
        scaled y ticks=manual:{$+ 146\,820$}{
          \pgfmathparse{(#1-146820)}
        }, ytick={146820, 146895, 146970}]
        \addplot [] table [x index=0, y index=3] {input/psystem11jumpCPL3_4.dat};
  \end{groupplot}
\end{tikzpicture}
\caption{Numerical solution for system~\eqref{eq:psystemrelaxation} in the relaxation limit using coupling approaches~3 (solid line) given by~\eqref{eq:psystemrelaxationcoupling} and $\beta_1=\beta_2=1$ and 4 (dotted line) given by~\eqref{eq:relaxationrhocondition},~\eqref{eq:rhovjumpagain},~\eqref{eq:v1condition},~\eqref{eq:nonlinearV2condition} and $\beta_1=\beta_2=1$. The coupled relaxed scheme on $1000$ mesh cells with $\text{CFL}=0.49$ has been used. Except for a jump of small magnitude occuring in approach 3 at the interface both solutions are indistinguishable.}\label{fig:psystemcpl3}
\end{figure}
Eventually, numerical results for the third coupling approach given by~\eqref{eq:psystemrelaxationcoupling} and $\beta_1=\beta_2=1$ are shown in Figure~\ref{fig:psystemcpl3}. Using this condition the solution of the coupled relaxation system qualitatively matches the one of the original system~\eqref{eq:psystem} with coupling condition~\eqref{eq:psystemjumpcondition}. This indicates that the intrinsic contradiction in the relaxation limit between~\eqref{eq:rhovjumpagain} and~\eqref{eq:v1condition} in both coupling approaches one and two due to $\beta_1\neq \beta_2$ deteriorates the quality of the relaxation approach as an approximation of the original problem. We note hence, that consistency of the relaxation approach~\eqref{eq:psystemrelaxation} to system~\eqref{eq:psystem} as we have verified for our fourth coupling approach has not been necessary to achieve an accurate numerical solution. This is presumably due to the small magnitude of the additional term $\mathcal{E} (2 \rho_L v_L + \mathcal{E})/\rho_L$ in~\eqref{eq:nonlinearV2condition} that has been added in the fourth coupling approach, which compared to the magnitude of $V_{2R}$ and $V_{2L}$ is smaller by a factor of $10^{-6}$. A magnification at the interface of the solution corresponding to approach 3 reveals a jump of low magnitude not present in the reference solution. The solution for the fourth coupling approach given by~\eqref{eq:relaxationrhocondition},~\eqref{eq:rhovjumpagain},~\eqref{eq:v1condition},~\eqref{eq:nonlinearV2condition} and $\beta_1=\beta_2=1$ also shown in Figure~\ref{fig:psystemcpl3} and being qualitatively almost similar to the one of coupling approach 3 does not exhibit this artifact and therefore achieves the best fit to the reference solution.

\begin{figure}
  \centering
\begin{tikzpicture}
  \begin{groupplot}[
        group style={group size=2 by 1,
            horizontal sep = 1.2 cm, 
            vertical sep = .6 cm,
            xticklabels at=edge bottom,
            xlabels at=edge bottom}, 
          width =.5 \linewidth,
          height = .22 \linewidth,
          xmin=0, xmax=0.55, 
          xlabel={$t$},
          every tick label/.append style={font=\scriptsize},
          label style={font=\scriptsize},
          title style={font=\scriptsize},
          legend columns=-1,
          legend style={font=\scriptsize, fill=none, /tikz/every even column/.append style={column sep=.25cm}}
          ]
          \nextgroupplot[title={coupling error concerning~\eqref{eq:rhovjump}}, ylabel={$E^1$}]
          \addplot [color=cyan, thick] table [x index=0, y index=2] {input/psystem11jumpCPL_cplerror_1.dat};
        \addplot [color=olive, thick, dashed] table [x index=0, y index=2] {input/psystem11jumpCPL_cplerror_2.dat};
        \addplot [color=red, thick] table [x index=0, y index=2] {input/psystem11jumpCPL_cplerror_3.dat};
        \addplot [color=black, thick, dashed] table [x index=0, y index=2] {input/psystem11jumpCPL_cplerror_4.dat};
          \nextgroupplot[title={coupling error concerning~\eqref{eq:psystemcontinuousdensity}}, ylabel={$E^2$}, legend to name=leg:cplapproaches]
        \addplot [color=cyan, thick] table [x index=0, y index=1] {input/psystem11jumpCPL_cplerror_1.dat};
        \addplot [color=olive, thick, dashed] table [x index=0, y index=1] {input/psystem11jumpCPL_cplerror_2.dat};
        \addplot [color=red, thick] table [x index=0, y index=1] {input/psystem11jumpCPL_cplerror_3.dat};
        \addplot [color=black, thick, dashed] table [x index=0, y index=1] {input/psystem11jumpCPL_cplerror_4.dat};
        \legend{coupling 1, coupling 2, coupling 3, coupling 4}
  \end{groupplot}
\end{tikzpicture}
\pgfplotslegendfromname{leg:cplapproaches}
\caption{Coupling errors $E^1$ (left) and $E^2$ (right) defined in~\eqref{eq:couplingerrors} over the time interval $[0,0.55]$ for the four considered coupling approaches. Errors for numerical solutions computed by the central scheme on 1000 mesh cells using $\text{CFL}=0.49$ are considered. Both coupling approaches 3 and 4 achieve significantly lower errors than approaches 1 and 2 concerning~\eqref{eq:rhovjump}. Approach 4 reduces the error concerning~\eqref{eq:psystemcontinuousdensity} over approaches 1--3.}\label{fig:couplingerrors}
\end{figure}
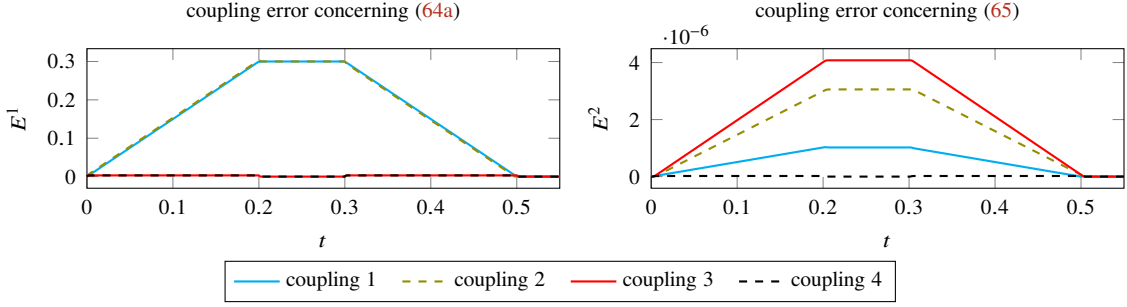

For an insight into the approximation quality of the central scheme at the interface we consider the coupling errors with respect to the original coupling conditions~\eqref{eq:rhovjump} and~\eqref{eq:psystemcontinuousdensity} defined as
\begin{equation}\label{eq:couplingerrors}
  E^1(t^n) = \left| (U_{-1}^n - U_{0}^n)_2 - \mathcal{E} \right|, \qquad E^2(t^n) = \left|(U_{-1}^n - U_{0}^n)_1 \right|
\end{equation}
Using the numerical solutions by the central scheme over 1000 mesh cells and employing Courant number $\text{CFL}=0.49$ we present these errors over time in Figure~\ref{fig:couplingerrors}. In case of the error concerning the density condition~\eqref{eq:psystemcontinuousdensity} the temporal errors follow the monotony of the momentum outtake $\mathcal{E}$. Approaches 4 and 1 here yield lower errors than approaches 2 and 3. Concerning the momentum condition~\eqref{eq:rhovjump} both approaches 1 and 2 yield coupling errors in the magnitude of the momentum outtake and are clearly outperformed by approaches 3 and 4 both achieving similarly small errors.  

\begin{table}\scriptsize
  \centering
  \caption{Mesh convergence of the coupling error $L^1(E^1)$ over time for the considered coupling approaches. First order error convergence for approaches 3 and 4 is indicated.}\label{tab:couplingconvergence}
\begin{tabular}{r l l l l l l l l} \toprule
  cells & coupling 1 & EOC & coupling 2 & EOC & coupling 3 & EOC & coupling 4 & EOC \\ \midrule
100 & \(8.999 \times 10^{-2}\) & & \(9.155 \times 10^{-2}\) & &
\(1.278 \times 10^{-2}\) & & \(1.278 \times 10^{-2}\) & \\
200 & \(8.999 \times 10^{-2}\) & 0.00 & \(9.040 \times 10^{-2}\) & 0.02
& \(6.325 \times 10^{-3}\) & 1.01 & \(6.324 \times 10^{-3}\) & 1.01 \\
400 & \(8.999 \times 10^{-2}\) & 0.00 & \(9.011 \times 10^{-2}\) & 0.00
& \(3.147 \times 10^{-3}\) & 1.01 & \(3.146 \times 10^{-3}\) & 1.01 \\
800 & \(8.999 \times 10^{-2}\) & 0.00 & \(9.004 \times 10^{-2}\) & 0.00
& \(1.569 \times 10^{-3}\) & 1.00 & \(1.569 \times 10^{-3}\) & 1.00 \\
1600 & \(8.999 \times 10^{-2}\) & 0.00 & \(9.003 \times 10^{-2}\) & 0.00
& \(7.838 \times 10^{-4}\) & 1.00 & \(7.837 \times 10^{-4}\) & 1.00 \\ \bottomrule
\end{tabular}
\end{table}

To investigate the scaling of the coupling errors with the mesh resolution we compute numerical solutions for increasing numbers of mesh cells and consider the errors~\eqref{eq:couplingerrors} in the discrete $L^1$ norm over time, i.e.,
\[
  L^1(E^i)= \sum_{k=0}^{k_T} \Delta t E^i(t^k),\qquad k_T = \lceil 0.55/\Delta t\rceil, \qquad i=1,2.
\]
Table~\ref{tab:couplingconvergence} shows the error $L^1(E^1)$ for the considered coupling approaches after mesh refinement along with the corresponding experimental order of convergence (EOC)\footnote{The EOC is computed by the formula $\text{EOC}=\log_2(E^1/E^2)$ with $E^1$ and $E^2$ denoting the error in two consecutive lines of the table.}. In case of coupling approaches 1 and 2 the errors are not significantly decreasing when refining the mesh. Conversely mesh convergence of order one can be clearly observed for the coupling approaches 3 and 4. Moreover, a minor advantage of coupling approach 4 over coupling approach 3 can be seen from the errors in Table~\ref{tab:couplingconvergence}.

\begin{table}\scriptsize
  \centering
  \caption{Mesh convergence of the coupling error $L^1(E^2)$ over time for the considered coupling approaches. First order error convergence is only indicated for approach 4.}\label{tab:couplingconvergenceE2}
\begin{tabular}{r l l l l l l l l} \toprule
  cells & coupling 1 & EOC & coupling 2 & EOC & coupling 3 & EOC & coupling 4 & EOC \\ \midrule
100 & \(3.131 \times 10^{-7}\) & & \(9.173 \times 10^{-7}\) & &
\(1.224 \times 10^{-6}\) & & \(8.685 \times 10^{-8}\) & \\
200 & \(3.083 \times 10^{-7}\) & 0.02 & \(9.179 \times 10^{-7}\) & 0.00
& \(1.224 \times 10^{-6}\) & 0.00 & \(4.307 \times 10^{-8}\) & 1.01 \\
400 & \(3.070 \times 10^{-7}\) & 0.01 & \(9.179 \times 10^{-7}\) & 0.00
& \(1.224 \times 10^{-6}\) & 0.00 & \(2.143 \times 10^{-8}\) & 1.01 \\
800 & \(3.067 \times 10^{-7}\) & 0.00 & \(9.179 \times 10^{-7}\) & 0.00
& \(1.224 \times 10^{-6}\) & 0.00 & \(1.069 \times 10^{-8}\) & 1.00 \\
1600 & \(3.066 \times 10^{-7}\) & 0.00 & \(9.179 \times 10^{-7}\) & 0.00
& \(1.224 \times 10^{-6}\) & 0.00 & \(5.336 \times 10^{-9}\) & 1.00 \\ \bottomrule
\end{tabular}
\end{table}
The error $L^1(E^2)$ after mesh refinement is presented in Table~\ref{tab:couplingconvergenceE2}. In case of the linear coupling approaches no significant variation with the mesh resolution can be observed while the errors are of small magnitude. Convergence with respect to the mesh resolution is only observed in case of coupling approach 4 and consistency with the original problem.

\section{Conclusion}
We have presented a novel framework to handle systems of conservation laws coupled at an interface using a relaxation approach. When systems of conservation laws are coupled usually information on the Lax-curves of the underlying systems as well as complex and error-prone computations are required to identify suitable coupling data. The relaxation approach avoids this complication by replacing the nonlinear systems of conservation laws by systems of linear balance laws. We have introduced Riemann solvers for the new approach, discussed their well-posedness and proposed a finite volume scheme that can be generally applied to coupled hyperbolic problems.   

The approach requires a suitable adjustment of the original coupling condition to account for the modified systems. Considering the $p$-system in a case study we have illustrated how the choice of this condition affects the numerical solutions. While accurate results could be obtained using simple linear ad-hoc conditions, best results that also exhibit convergence at the interface have been achieved constructing a coupling condition both leading to a well-defined Riemann solver and being in the relaxation limit consistent with the original coupling condition. Motivated by these results we suggest the same strategy for the design of the relaxation coupling condition also for different coupled  hyperbolic systems.

\section*{Statements and Declarations}
\textbf{Compliance with Ethical Standards} The authors state that they comply with the ethical standards.\\[0.2cm]
\textbf{Funding} The authors thank the Deutsche Forschungsgemeinschaft (DFG, German Research Foundation) for the financial support through 320021702/GRK2326,  333849990/IRTG-2379, B04, B05 and B06 of 442047500/SFB1481, HE5386/18-1,19-2,22-1,23-1,25-1, ERS SFDdM035 and under Germany’s Excellence Strategy EXC-2023 Internet of Production 390621612 and under the Excellence Strategy of the Federal Government and the Länder. Support through the EU DATAHYKING is also acknowledged. 
\\[0.2cm]
\textbf{Conflict of Interest} On behalf of all authors, the corresponding author states that there is no conflict of interest.\\[0.2cm]
\textbf{Ethical Approval} Not applicable.
 
\bibliographystyle{abbrvurl} 
\bibliography{references.bib}
\end{document}